 \newtheorem{theorem}{Theorem}[section]
 \newtheorem{corollary}[theorem]{Corollary}
 \newtheorem{lemma}[theorem]{Lemma}
 \newtheorem{proposition}[theorem]{Proposition}
 \theoremstyle{definition}
\newtheorem{definition}[theorem]{Definition}
\newtheorem{conjecture}[]{Conjecture}
 \theoremstyle{remark}
 \newtheorem{remark}[theorem]{Remark}
 \newtheorem{example}{Example}
\def\scal#1#2{\langle #1, #2\rangle}
\def\R#1{\mathbb{R}^{#1}}
\def\TOP{\mathrm{t}}
\DeclareMathOperator{\DIV}{\mathrm{div}}
\DeclareMathOperator{\trace}{\mathrm{tr}}
\begin{document}

\title{Minimal cubic cones via Clifford algebras}

\subjclass{Primary 53C42, 49Q05; Secondary 53A35}

\keywords{Minimal submanifolds; Clifford algebras; Clifford systems; algebraic minimal cones}

\author{Vladimir Tkachev}

\address{Mathematical department, Royal Institute of Technology, S-10044, Stockholm, Sweden}

\email{tkatchev@kth.se}

\begin{abstract}
In this paper, we construct two  infinite families of algebraic minimal cones in $\R{n}$. The first family consists of  minimal cubics given explicitly in terms of the Clifford systems. We show that the classes of congruent minimal cubics are in one to one correspondence with those of geometrically equivalent Clifford systems. As a byproduct, we prove that for any $n\ge4$, $n\ne 16k+1$, there is at least one minimal cone in $\R{n}$ given by an irreducible homogeneous cubic polynomial. The second family consists of minimal cones in $\R{m^2}$, $m\ge2$, defined by an irreducible homogeneous polynomial of degree $m$. These examples provide particular answers to the questions on algebraic minimal cones in $\R{n}$ posed by Wu-Yi Hsiang in the 1960's.
\end{abstract}


\maketitle

\section{Introduction}

In 1916, S.~Bernstein proved his famous theorem asserting that any entire solution $u=u(x_1,x_2)$ of the minimal surface equation
\begin{equation}\label{min}
\mathrm{div} \frac{\nabla u}{\sqrt{1+|\nabla u|^2}}=0,
\end{equation}
must be an affine function. In general, the left hand side of (\ref{min}) is the mean curvature of the graph $x_{n}=u(x_1,\ldots,x_{n-1})$ in $\R{n}$. The question whether the Bernstein result holds true in any dimension $n\ge 3$ (Bernstein's problem) was  a long standing problem until J.~Simons \cite{Simons} proved that the Bernstein property holds in lower dimensions $n\le 8$ and E.~Bombieri, E.~Di Giorgio and E.~Giusti in \cite{BGG} constructed a non-affine minimal graph over $\R{9}$. An important role in establishing of Bernstein's problem and the constructing of non-affine examples for $n\ge 9$ played Simon's cone $\{(x, y)\in \R{4}\times \R{4}: |x|^2=|y|^2\}$ in $\R{8}$ and its generalizations in $\R{p+q}$ given by the implicit equation
\begin{equation}\label{ClSim}
(q-1)(x_1^2+\ldots+x_p^2)+(p-1)(y_{1}^2+\ldots+y_{q}^2)=0, \quad p,q\ge 2.
\end{equation}
We refer the interested reader to \cite{SS}, \cite{Simon89} for the further details, and mention a very recent discussion of the breakdown of Bernstein's theorem and geometry of quadratic minimal cones in context of critical dimensions for the stable  cone solution of brane in the brane–black-hole system \cite{Gibbons}, \cite{Frolov}.

The Clifford-Simons cones being defined by a quadratic equation are the simplest  examples of \textit{algebraic} minimal cones. It is well known that any minimal quadratic cone can be brought into the form (\ref{ClSim}) in some orthogonal coordinates in $\R{n}$. In particular, any quadratic minimal cone is completely determined by a non-ordered integer pair $(p,q)$.

On the other hand, finding classification of minimal algebraic cones of higher degrees remains a long-standing problem \cite{SimonL}, \cite{Hsiang67}, \cite{Fomenko}.  Even in the case of minimal \textit{cubic} cones there are only few  examples known. For our further convenience, we give a short description of these examples.

\begin{example}\label{ex1}
By using the representation theory of the compact Lie groups $SO(3)$ and $SU(3)$, one gets two homogeneous minimal cubic cones in $\R{4}$ and $\R{7}$ respectively. For example, the defining equation of the first cone is $x_3(x_1^2-x_2^2)+2x_4x_1x_2=0$. This cone is also a member of the Lawson family \cite{Lawson} of compact minimal surfaces in the unit sphere $\mathbb{S}^3\subset \R{4}$.
\end{example}

\begin{example}\label{ex2}
Another series of minimal cubic cones is obtained from four isoparametric surfaces with three constant principal curvatures discovered  by \'{E}.~Cartan \cite{Cartan} in 1939. These  correspond to  standard Veronese embeddings of a projective plane $\mathbb{F}_d P^2$ into the unit sphere in $\R{3d+2}$, $d=1,2,4,8$, where $\mathbb{F}_d$ is one of the four only possible classical division algebras: $\mathbb{F}_1=\R{}$, $\mathbb{F}_2=\mathbb{C}$, $\mathbb{F}_4=\mathbb{H}$ and $\mathbb{F}_8=\mathbb{O}$. The corresponding defining polynomials are given explicitly by (cf. \cite[p.~34]{Cartan})
\begin{equation*}\label{CartanFormula}
\begin{split}
f_d(x)& =x_{n}^3-3x_{n}x_{n-1}^2+\frac{3}{2}x_n(X_0\bar X_0+X_1\bar X_1-2X_2\bar X_2)\\
&+\frac{3\sqrt{3}}{2}x_{n-1}(X_0\bar X_0-X_1\bar X_1)
+\frac{3\sqrt{3}}{2}((X_0X_1)X_2+\bar X_2(\bar X_1\bar X_0)),
\end{split}
\end{equation*}
where $x=(X_0,X_1,X_2,x_{n-1},x_{n})$, the vectors $X_k=(x_{kd+1},\ldots,x_{kd+d})$ are identified with the corresponding elements of $\mathbb{F}_d$, $k=0,1,2$, and $\bar X$ denotes the conjugate  of $X$ in $\mathbb{F}_d$. It is well known that $f^{-1}_d(0)$ are minimal cubic cones in $\R{n}$, $n=3d+2$ (thus, $n=5,8,14$ and $26$).
\end{example}

\begin{example}\label{ex3}
W.~Y.~Hsiang \cite{Hsiang67} gave an elegant construction of two minimal cubics, in $\R{9}$ and $\R{15}$ respectively. The first example  exploits some special properties of the orthogonal invariants of the space $\mathfrak{G}'(4,\R{})$ of quadratic forms of 4 real variables with trace zero. By identifying $\mathfrak{G}'(4,\R{})$ with $\R{9}$, the defining polynomial of  the first cubic is given by $b_3(Y)=0$, where
$$
\det (Y-tI_{\R{4}})=t^4+b_2(Y)t^2+b_3(Y)t+\det Y,\quad Y\in M,
$$
and $I_V$ stands for the identity operator on $V$. We discuss an explicit representation of this cubic in Section~\ref{sec:det} below. The second cubic is similarly constructed by making use of the space $\mathfrak{G}'(4,\mathbb{C}{})$ of Hermitian forms of 4 complex variables with trace zero.
\end{example}

In this paper, we construct two new families of minimal cones. We show that any Clifford system $A_0,A_1,\ldots,A_q$ in $\R{2m}$ generates an irreducible minimal cubic cone in $\R{2m+q+1}$; see explicit formulas in Section~\ref{sec:3} below. This yields a partial answer to the Problem~1 posed earlier by Hsiang in \cite{Hsiang67}. Namely, we show that for any $n\ge 4$, $n\ne 16k+1$, there is at least one irreducible minimal cubic in $\R{n}$. In Section~\ref{sec:4} we show that the congruence classes of the constructed minimal cubics are in one-to-one correspondence with those of geometrically equivalent Clifford systems.

In Section~\ref{sec:det} we describe another family of minimal cones given by irreducible homogeneous polynomials of arbitrary high degree. More precisely, for any integer $n\ge 2$ we construct a  minimal cone in $\R{n^2}$ given by an irreducible homogeneous polynomial of degree  $n$. If $n=3$, the corresponding cone agrees under an isometry with the Hsiang minimal cubic cone in $\R{9}$ mentioned in Example~\ref{ex3} above.

\section{Preliminaries}\label{sec:pre}

Let $f\in \R{}[x_1,\ldots, x_n]$ be a homogeneous irreducible polynomial with real coefficients of degree $s\ge 1$. If we assume that $\mathcal{F}(f)$ contains regular points of $f$, then $\mathcal{F}(f):= f^{-1}(0)$ defines an $(n-1)$-dimensional cone in $\R{n}$. Then it is well known (see, for instance, \cite{Hsiang67}) that the cone $\mathcal{F}(f):= f^{-1}(0)$ is minimal if and only if
\begin{equation}\label{Lgamma1}
L(f)\equiv 0 \mod f,
\end{equation}
where the normalized mean curvature operator $L$ is defined by
\begin{equation}\label{Lgamma}
L(f):=|\nabla f|^{3} \DIV \frac{\nabla f}{|\nabla f|}= |\nabla f|^2\Delta f-
\sum_{i,j=1}^n f_{x_i} f_{x_j} f_{x_ix_j}.
\end{equation}
For linear forms (i.e. $s=1$) one has  $L(f)\equiv 0$, which reflects the well-known fact that hyperplanes in $\R{n}$ have zero mean curvature.

Note that the above congruence also is well-defined for general homogeneous polynomials $f$, thus we have the following definition.

\begin{definition}\label{def1}
A homogeneous polynomial $f$ satisfying  (\ref{Lgamma1}) is called an \textit{eigenfunction} of $L$. If $\deg f=3$ it is called a \textit{minimal cubic}. The ratio $\lambda(f):=L(f)/f$ will be called the \textit{weight}  of $f$.
\end{definition}

For a minimal cubic, its weight $\lambda(f)$ is a quadratic form, hence its diagonal form has an invariant meaning (recall that the operator $L$ is invariant under orthogonal substitutions  \cite[Lemma~2]{Hsiang67}). In this paper, we are primarily  interested in the case when
\begin{equation}\label{eigg}
\lambda(f)=c|x|^2, \quad c\in \R{}.
\end{equation}
Any minimal cubic $f$ satisfying (\ref{eigg}) will be called a \textit{radial} minimal cubic. Notice that for a radial minimal cubics, the weight function $\lambda(f)$ is rotationally invariant and it contains a complete information about all variables of $f$, which makes it possible to define the dimension of $f$ as the dimension of the vector $x$.

\begin{remark}
The notion of dimension for general eigenfunctions of $L$ is not well defined  because any eigenfunction in $\R{n}$ also is (by adding `redundant variables') an eigenfunction in a larger $\R{n+k}$.
\end{remark}

\begin{definition}
Two homogeneous polynomials $f_1$ and $f_2$ are called \textit{congruent} if $f_1(x)=cf_2(Ux)$, where $U$ is an orthogonal endomorphism of $\R{n}$ and $c\in \R{}$, $c\ne0$.
\end{definition}

\begin{proposition}\label{pr:1}
Two irreducible cubics $f_1$ and $f_2$ in $\R{n}$ are congruent if and only if the corresponding cones $\mathcal{F}(f_1)$  and $\mathcal{F}(f_2)$ are congruent.
\end{proposition}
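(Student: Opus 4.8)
The forward implication is immediate: if $f_1(x)=c\,f_2(Ux)$ with $U$ orthogonal and $c\ne0$, then $x\in\mathcal{F}(f_1)$ iff $f_2(Ux)=0$ iff $Ux\in\mathcal{F}(f_2)$, so $U$ maps $\mathcal{F}(f_1)$ onto $\mathcal{F}(f_2)$ and the cones are congruent. For the converse I would first realize the congruence by an \emph{orthogonal} map and then reduce the statement to a single algebraic fact. Suppose an isometry $\phi$ of $\R{n}$ carries $\mathcal{F}(f_1)$ onto $\mathcal{F}(f_2)$. Both sets are invariant under all homotheties $x\mapsto tx$, so each has the origin as a vertex; assuming, as one may after restricting to the span of the essential variables, that $f_1$ depends on all coordinates, this vertex is the unique center of homothety symmetry, whence $\phi(0)=0$ and $\phi=U\in O(n)$. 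Replacing $f_2$ by $g:=f_2\circ U$ — again an irreducible cubic, now with $\mathcal{F}(g)=U^{-1}\mathcal{F}(f_2)=\mathcal{F}(f_1)$ — the proposition reduces to the claim that two irreducible real cubics with the same real zero set are proportional.

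To prove this claim I would pass to $\Com{n}$. Since $f_1$ has odd degree and is irreducible over $\R{}$, it is irreducible over $\Com{}$ as well: the complex irreducible factors of a real polynomial are permuted by conjugation, so a factor $p$ not proportional to $\bar p$ would produce the real divisor $p\bar p$, forcing $f_1=c\,p\bar p$ by $\R{}$-irreducibility, which is impossible for $\deg f_1=3$. Hence every factor is conjugation invariant, and $\R{}$-irreducibility leaves exactly one, of multiplicity one. Thus $(f_1)$ is a prime, hence radical, ideal of $\Com{}[x_1,\ldots,x_n]$ and the complex hypersurface $\{f_1=0\}\subset\Com{n}$ is irreducible of dimension $n-1$.

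Finally I would invoke Zariski density together with the Nullstellensatz. By hypothesis $\mathcal{F}(f_1)$ contains a regular point $x_0$, near which it is a smooth real $(n-1)$-manifold $M$ sitting inside $\{f_1=0\}\subset\Com{n}$. The Zariski closure $W$ of $M$ is defined over $\R{}$, and since the real points of a variety of complex dimension $k$ have real dimension at most $k$, the inclusion $M\subseteq W$ forces $\dim_{\Com{}}W\ge n-1$; being contained in the irreducible $(n-1)$-dimensional hypersurface, $W$ equals all of $\{f_1=0\}$. As $g$ vanishes on $\mathcal{F}(g)=\mathcal{F}(f_1)\supseteq M$, it vanishes on the Zariski-dense set $M$, hence on all of $\{f_1=0\}\subset\Com{n}$, so by the Nullstellensatz $g\in\sqrt{(f_1)}=(f_1)$, i.e. $f_1\mid g$. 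Comparing the equal degrees gives $g=c\,f_1$, that is $f_1=c^{-1}f_2\circ U$, the required congruence.

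The main obstacle is this last passage from the real zero set to its complexification: everything hinges on the real cone being Zariski dense in $\{f_1=0\}\subset\Com{n}$, and this is exactly where the standing assumption that $\mathcal{F}(f)$ contains regular points — so that the real cone genuinely has dimension $n-1$ — is indispensable; without it, as definite forms with isolated real zeros show, the real zero set fails to determine the polynomial. A smaller point needing care is the reduction of the realizing isometry to an orthogonal transformation, i.e. verifying that $\phi$ must preserve the vertex of the cone.
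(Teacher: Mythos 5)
Your overall strategy coincides with the paper's: reduce to the statement that $g:=f_2\circ U$ vanishes on $\mathcal{F}(f_1)$ and must therefore be divisible by $f_1$, then compare degrees. For the divisibility step you replace the paper's citation of the real Nullstellensatz (Lemma~2.5 in \cite{Milnor}) by a self-contained complexification argument: you show an $\R{}$-irreducible cubic is $\Com{}$-irreducible (the odd-degree/conjugation argument is correct), and then use Zariski density of a neighbourhood of a regular point together with the complex Nullstellensatz. That part is sound and is a reasonable way to unpack the black box the paper invokes. (The worry about reducing a general isometry to an orthogonal map is moot here: the paper's notion of congruence of cones is already defined via an orthogonal endomorphism.)

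The genuine gap is the regular point. You write ``by hypothesis $\mathcal{F}(f_1)$ contains a regular point'' and later concede that this assumption is ``indispensable'' --- but the proposition as stated carries no such hypothesis, and without it your whole Zariski-density argument collapses (as you yourself note via the example of definite forms). The paper closes exactly this hole with a separate lemma: for any irreducible cubic $f\not\equiv0$, the set $\mathcal{F}(f)$ automatically contains a regular point. Its proof is elementary: maximize $f$ on the unit sphere to bring it to the gap form $f=ax_n^3+x_n\phi(x')+\psi(x')$ with $a\ne0$ and $\psi\not\equiv0$ (irreducibility enters here), then observe that if every real zero of $f$ were critical, then for each $u$ with $\psi(u)\ne0$ every real root of the one-variable cubic $P(t)=at^3+\phi(u)t+\psi(u)$ would also be a root of $P'(t)$, forcing $P(t)\equiv at^3$ and contradicting $\psi(u)\ne0$. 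Supplying this lemma (or an equivalent argument) is necessary to make your proof a complete proof of the proposition as stated; with it included, your argument goes through.
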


The proof is based on the following lemma.

\begin{lemma}\label{lem:milnor}
Let $f$ be an irreducible cubic,  $f\not\equiv 0$. Then $\mathcal{F}(f)$ contains a regular point of $f$.
\end{lemma}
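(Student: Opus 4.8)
The plan is to show that $\nabla f$ cannot vanish on all of $\mathcal{F}(f)=f^{-1}(0)$ by a dimension count. Set $\Sigma:=\{x:\nabla f(x)=0\}$. Euler's identity $3f=\sum_i x_i f_{x_i}$ gives $\Sigma\subseteq \mathcal{F}(f)$, so it suffices to produce a point of $\mathcal{F}(f)\setminus\Sigma$: such a point lies on the cone and is a regular point of $f$. Thus I must bound $\dim_{\R{}}\Sigma$ from above and $\dim_{\R{}}\mathcal{F}(f)$ from below, and the whole argument hinges on the parity of the degree.

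First I would pass to $\Com{}$. Since $f$ has odd degree $3$, real irreducibility forces irreducibility over $\Com{}$: writing $f$ as a product of irreducible complex factors and applying complex conjugation, which permutes these factors while preserving their degrees, one checks that in each possible factorization type (a linear times a quadratic, or three linear forms) some factor must be mapped to a scalar multiple of itself and is therefore real, contradicting the $\R{}$-irreducibility of $f$. Hence the complex hypersurface $V_{\Com{}}:=\{z\in\Com{n}:f(z)=0\}$ is irreducible and, being cut out by a squarefree polynomial in characteristic zero, is smooth on a dense Zariski-open subset; its singular locus $\Sigma_{\Com{}}=\{\nabla f=0\}$ therefore has complex dimension at most $n-2$. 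As $\nabla f$ has real coefficients, $\Sigma_{\Com{}}$ is defined over $\R{}$, so $\dim_{\R{}}\Sigma=\dim_{\R{}}(\Sigma_{\Com{}}\cap\R{n})\le \dim_{\Com{}}\Sigma_{\Com{}}\le n-2$.

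Next I would bound $\mathcal{F}(f)$ from below, again using oddness. Homogeneity of odd degree gives $f(-x)=-f(x)$, so $f$ assumes both signs; the open sets $\{f>0\}$ and $\{f<0\}$ are then nonempty and full-dimensional, and since $\R{n}$ is connected the zero set $\mathcal{F}(f)$ must separate them, which forces $\dim_{\R{}}\mathcal{F}(f)=n-1$. Comparing the two estimates, $\dim_{\R{}}\Sigma\le n-2<n-1=\dim_{\R{}}\mathcal{F}(f)$, so $\mathcal{F}(f)\not\subseteq\Sigma$ and a regular point exists.

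The crux is the transfer from the complex to the real picture: generic smoothness of an irreducible hypersurface is automatic over $\Com{}$, but a priori its smooth points could all be non-real, so the upper bound $\dim_{\R{}}\Sigma\le n-2$ is useless without the matching lower bound $\dim_{\R{}}\mathcal{F}(f)=n-1$. This is precisely where the odd degree is indispensable, and it is the only delicate point in the argument; for even degree the statement genuinely fails, as the irreducible quadric $x_1^2+\cdots+x_n^2$ with $n\ge3$ has $\{0\}$ as its only real zero and hence no regular real point at all.
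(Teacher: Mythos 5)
Your argument is correct, but it takes a genuinely different route from the paper's. You prove the lemma by a dimension count: real irreducibility of a cubic forces irreducibility over $\Com{}$ (your conjugation argument is sound, including the implicit unimodular rescaling needed to make the fixed factor real), so the critical set $\Sigma=\{\nabla f=0\}$, being the singular locus of an irreducible complex hypersurface, has complex --- hence real --- dimension at most $n-2$, while oddness of the degree forces $f^{-1}(0)$ to separate $\{f>0\}$ from $\{f<0\}$ and hence to have real dimension $n-1$. The paper instead argues completely elementarily: it first puts $f$ into the gap form $f=ax_n^3+x_n\phi(x_1,\dots,x_{n-1})+\psi(x_1,\dots,x_{n-1})$ with $a\ne0$, by taking $e_n$ to be a maximum point of $f$ on the unit sphere; irreducibility gives $\psi\not\equiv0$, and restricting $f$ to a line $\{(u,t):t\in\R{}\}$ with $\psi(u)\ne0$ reduces everything to the observation that a univariate cubic $at^3+bt+c$ all of whose real roots are also roots of its derivative must equal $at^3$, contradicting $c\ne0$. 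The trade-off: the paper's proof uses nothing beyond Lagrange multipliers and the existence of real roots of cubics, whereas yours leans on two standard but nontrivial facts of real algebraic geometry (that the real points of a complex variety of dimension $d$ have real dimension at most $d$, and that a closed set separating $\R{n}$ has dimension at least $n-1$, or equivalently that an algebraic set of codimension at least $2$ does not disconnect $\R{n}$); these should be stated with references if the argument is to be self-contained. In exchange, your route proves more: the same dimension count shows that every $\R{}$-irreducible form of \emph{any} odd degree has a regular real zero, since the conjugation involution on the multiset of complex irreducible factors must fix some factor when the total degree is odd, and your quadric example correctly shows the parity hypothesis cannot be dropped.
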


\begin{proof}
First we notice that in some orthogonal coordinates, $f$ can be written as follows:
\begin{equation}\label{gap}
f(x)=ax_n^3+x_n\phi(x_1,\ldots,x_{n-1})+\psi(x_1,\ldots,x_{n-1}), \quad a\ne0.
\end{equation}
Indeed, the restriction of $f(x)$ on the unit sphere $|x|^2=1$ is continuous, hence it attains its maximum at some point $x^0$ (since $f(x)$ is a non-identically zero odd function, the maximum is strictly positive). By Lagrange principle, the gradient of $f$ is collinear to the unit vector $\nabla |x|^2$ at $x^0$, i.e. $\nabla f(x^0)=a x^0$. We have by the homogeneity of $f$,
$$
a=\scal{\nabla f(x^0)}{x^0}=3f(x^0)> 0.
$$
Setting $e_n=x^0$ and completing $e_n$ to an orthonormal basis of $\R{n}$, one can easily see that $f(x)$ takes the required gap form (\ref{gap}) in the new coordinates.

We shall proceed by contradiction. Suppose that $\mathcal{F}(f)$ contains no regular points of $f$, i.e. $f=0$ implies $\nabla f=0$. Since $f$ is irreducible, $\psi\not\equiv 0$, hence there exists   $u=(u_1,\ldots,u_{n-1})\ne 0$ such that $\psi(u)\ne0$. Then the cubic polynomial $P(t)\equiv at^3+bt+c$ has a real root $t_1\ne0$, where $b=\phi(u)$ and $c=\psi(u)$. Denote by $T$ the set of all real roots of $P(t)$. Then for any $t_i\in T$, the point $(u_1,\ldots,u_{n-1},t_i)\in \mathcal{F}(f)$, thus $\nabla f(u_1,\ldots,u_{n-1},t_i)=0$. The latter implies that the partial derivative $\partial_{x_n}f(u_1,\ldots,u_{n-1},t_i)=0$. Thus $P'(t_i)=0$ for any $t_i\in T$, that is any real zero of the polynomial $P(t)$ must be also a zero of its derivative $P'(t)$. But it is possible only if $P(t)\equiv at^3$, which  contradicts to $c=\psi(u)\ne 0$. The lemma is proved.
\end{proof}

\begin{proof}[Proof of Proposition~\ref{pr:1}]
It suffices to prove the `only if' part.
Suppose that the cones $\mathcal{F}(f_1)$ and $\mathcal{F}(f_1)$ are congruent, i.e. $\mathcal{F}(f_2)=U\mathcal{F}(f_1)$ for some orthogonal endomorphism $U$ of $\R{n}$. Consider the new cubic $g(x)=f_2(Ux)$. Then $g(x)= 0$ whenever $f_1(x)=0$. By Lemma~\ref{lem:milnor},  the cone $\mathcal{F}(f_1)$ contains a regular point. Applying the real Nullstellensatz for the algebraic set $\mathcal{F}(f_1)$ given by a single polynomial equation (see, for instance, Lemma~2.5 in \cite{Milnor}), we conclude that   $g(x)$ must be a multiple of $f_1$. Since $\deg g=\deg f_1=3$, we conclude that $g=cf_1(x)$ for some real $c\ne 0$. This shows that $cf_1(x)=f_2(Ux)$, the theorem follows.
\end{proof}

\section{Clifford minimal cubics}\label{sec:3}


We shall exploit the well-known fact that any Clifford algebra has a linear representation in a Euclidean space $\R{2m}$ such that all algebra generators act as orthogonal endomorphisms. This link between the representation theory of Clifford algebras and the so-called Clifford systems can be described as follows \cite{Baird}. Recall that a $q$-tuple $\mathcal{A}=(A_0,\ldots, A_q)$, $q\ge1$, of symmetric endomorphisms of $\R{2m}$ is called a \textit{(symmetric) Clifford system} on $\R{2m}$, or $ \mathcal{A}\in \mathrm{Cliff}(\R{2m},q)$, if
\begin{equation}\label{Cliffsys}
A_iA_j+A_jA_i=2\delta_{ij}\cdot I_{\R{2m}}, \quad 0\le i,j\le q.
\end{equation}
Recall that by $I_V$ we denote the identity operator on $V$.

\begin{remark}\label{r1}
Notice that all $A_i$ are orthogonal matrices and they are necessarily trace free. Indeed, since the trace is invariant under cyclic permutations, we have from (\ref{Cliffsys}) for $i\ne j$: $\trace A_i=-\trace A_jA_iA_j=-\trace A_iA_j^2=-\trace A_i$.
\end{remark}

Given a finite collection of matrices $A_i\in \R{s\times s}$, $0\le i\le q$, and a vector $z=(z_0,z_1,\ldots,z_q)\in \R{q+1}$, we make use the following notation:
$$
A_z:=\sum_{i=0}^q z_i A_i.
$$

Now we are ready to expose the first family of examples announced in the introduction.

\begin{theorem}[Clifford cubics]\label{th1}
Let $\mathcal{A}=(A_0,\ldots, A_q)\in \mathrm{Cliff}(\R{2m},q)$. Then
\begin{equation}\label{fx}
\Phi(x)\equiv \Phi_{\mathcal{A}}(x) :=y^\TOP A_z y,   \quad x=(y,z)\in \R{2m}\oplus \R{q+1},
\end{equation}
is a radial minimal cubic satisfying (\ref{Lgamma1}) with the weight
\begin{equation}\label{radialf}
\lambda(\Phi)=-8 |x|^2.
\end{equation}
\end{theorem}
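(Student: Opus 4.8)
The plan is to verify identity (\ref{radialf}) by a direct computation of $L(\Phi)$, organized so that the Clifford relations do the work; indeed $L(\Phi)=-8|x|^2\Phi$ immediately yields both the congruence (\ref{Lgamma1}) and radiality (\ref{eigg}) with $c=-8$. I split the coordinates as $x=(y,z)$ and abbreviate $q_i(y):=y^\TOP A_iy$, so that $\Phi=\sum_{i=0}^q z_iq_i(y)$ is homogeneous of degree $3$, quadratic in $y$ and linear in $z$.

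First I would record the first derivatives. Since each $A_i$ is symmetric, $\nabla_y\Phi=2A_zy$ and $\partial_{z_i}\Phi=q_i(y)$. A useful preliminary reduction is to observe that $\Phi$ is harmonic: the $z$-Laplacian vanishes because $\Phi$ is linear in $z$, while $\Delta_y\Phi=2\trace A_z=0$ by Remark~\ref{r1}. Consequently the first summand in (\ref{Lgamma}) drops out, leaving $L(\Phi)=-\tfrac12\scal{\nabla\Phi}{\nabla|\nabla\Phi|^2}$, so it suffices to compute $|\nabla\Phi|^2$ and pair its gradient against $\nabla\Phi$.

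Next I would evaluate $|\nabla\Phi|^2=4|A_zy|^2+\sum_i q_i(y)^2$. The first summand simplifies through the defining relation (\ref{Cliffsys}): contracting $A_iA_j+A_jA_i=2\delta_{ij}I_{\R{2m}}$ with $z_iz_j$ gives $A_z^2=|z|^2 I_{\R{2m}}$, whence $4|A_zy|^2=4|z|^2|y|^2$. Thus $|\nabla\Phi|^2=4|z|^2|y|^2+S(y)$, where $S(y):=\sum_i q_i(y)^2$ is a quartic in $y$.

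The remaining pairing is where the Clifford structure is essential, and controlling the quartic term $S$ is the main obstacle. The $z$-part contributes $\scal{\nabla_z\Phi}{\nabla_z|\nabla\Phi|^2}=8|y|^2\sum_i z_iq_i=8|y|^2\Phi$, while the diagonal part of the $y$-pairing gives $\scal{2A_zy}{8|z|^2y}=16|z|^2\Phi$. For the contribution of $\nabla_yS=4\sum_i q_i A_iy$ I would invoke the identity $A_zA_i+A_iA_z=2z_i I_{\R{2m}}$, again immediate from (\ref{Cliffsys}); symmetrizing the quadratic form yields $y^\TOP A_zA_iy=z_i|y|^2$, so that $\scal{2A_zy}{\nabla_yS}=8|y|^2\sum_i z_iq_i=8|y|^2\Phi$. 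Summing the three contributions gives $\scal{\nabla\Phi}{\nabla|\nabla\Phi|^2}=16(|z|^2+|y|^2)\Phi=16|x|^2\Phi$, and therefore $L(\Phi)=-8|x|^2\Phi$, which is precisely (\ref{radialf}).
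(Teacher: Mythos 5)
Your computation is correct and follows essentially the same route as the paper: harmonicity of $\Phi$ removes the Laplacian term, and the two Clifford identities $A_z^2=|z|^2I_{\R{2m}}$ and $A_zA_i+A_iA_z=2z_iI_{\R{2m}}$ yield exactly the contributions $8|z|^2\Phi$ and $8|y|^2\Phi$ that the paper calls $S_1$ and $S_2$, your only departure being the cosmetic rewriting of $\sum_{i,j} \Phi_{x_i}\Phi_{x_j}\Phi_{x_ix_j}$ as $\tfrac12\scal{\nabla\Phi}{\nabla|\nabla\Phi|^2}$, which spares you the explicit second derivatives and the basis-vector bookkeeping. Be aware only that the paper's proof of this theorem additionally verifies that $\Phi$ is irreducible (by specializing to $z_0(u_1^2-v_1^2)+2z_1u_1v_1$ and noting its discriminant in $u_1$ is not a perfect square) --- a fact not contained in the literal statement but used later for Corollary~\ref{cor:non} --- which your argument does not address.
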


\begin{proof}
By Remark~\ref{r1}, all $A_i$ are trace free. Hence $\Delta \Phi=0$ and
\begin{equation}\label{gradf}
\Phi_{y_i}=2e_i^\TOP A_zy, \quad \Phi_{z_i}=y^\TOP A_i y,
\end{equation}
and $\Phi_{y_iy_j}=2e_i^\TOP A_ze_j$ to (\ref{Lgamma}), where $\{e_i\}_{1\le i\le 2m}$ is the standard orthonormal basis in $\R{2m}$ (we interpret $e_i$ as a vector-column). We have
\begin{equation}\label{Lf}
\begin{split}
-L(\Phi)&=8\sum_{i,j=1}^{2m} e_i^\TOP A_zy \cdot e_i^\TOP A_ze_j\cdot e_j^\TOP A_zy +
8\sum_{i=1}^{2m}\sum_{k=0}^{q}e_i^\TOP A_zy \cdot e_i^\TOP A_k y \cdot y^\TOP A_k y\\
&=: S_1+S_2.
\end{split}
\end{equation}
We find  by virtue of (\ref{Cliffsys}) that
\begin{equation}\label{linear}
\begin{split}
A_{z'}A_{z''}+A_{z''}A_{z'}&=\sum_{i,j=0}^{q} z'_iz''_j (A_iA_j+A_jA_i)=I_{\R{2m}}\sum_{i,j=0}^q 2\delta_{ij}z'_iz''_j \\
&=\scal{z'}{z''}I_{\R{2m}},
\end{split}
\end{equation}
where $\scal{\cdot}{\cdot}$ is  the standard scalar product in $\R{q}$. In particular, $A_z^2=|z|^2I_{\R{2m}}$, hence
\begin{equation}\label{cube}
A_z^3=|z|^2A_z.
\end{equation}
On the other hand, $\sum_{i=1}^{2m}e_ie_i^{\TOP}=I_{\R{2m}}$, thus we obtain
\begin{equation*}
\begin{split}
S_1&=8\sum_{i,j=1}^{2m} y^\TOP A_z (e_i e_i^\TOP) A_z (e_j {e_j}^\TOP) A_zy=8y^\TOP A_z^3y=8|z|^2\cdot y^\TOP A_zy.
\end{split}
\end{equation*}

Now applying (\ref{linear}) to $z'=z$ and $z''= (y^\TOP A_0 y, \ldots, y^\TOP A_q y),$ we find
$$
y^{\TOP}A_{z}A_{z''} y=\frac{1}{2}y^{\TOP}(A_{z}A_{z''}+A_{z''}A_{z})y=|y|^2\scal{z}{\tau}=|y|^2\cdot y^{\TOP}A_{z}y,
$$
which yields
\begin{equation*}
\begin{split}
S_2&=8\sum_{j=0}^{q}y^\TOP A_zA_j y \cdot y^\TOP A_j y=
8y^{\TOP}A_{z}A_{\tau} y=8|y|^2\cdot y^{\TOP}A_{z}y.
\end{split}
\end{equation*}
Substituting the found relations into (\ref{Lf}) yields $L(\Phi)=-8(|z|^2+|y|^2)\Phi$, hence (\ref{radialf}) is proved.

In order to show that $\Phi$ is irreducible, we assume the contrary. Then any specialization of $\Phi$ must be reducible too. For instance, by setting $z_i=0$, $2\le i \le q$, we see that the cubic form
\begin{equation}\label{isreducible}
z_0\cdot y^{\TOP}A_0y+z_1\cdot y^{\TOP}A_1y
\end{equation}
is reducible. Since  $A_0$ and $A_1$ also form a Clifford system, one can  choose new orthogonal coordinates $(u,v)\in \R{m}\times \R{m}$ such that $A_0(u,v)=(u,-v)$ and $A_1(u,v)=(v,u)$; see \cite[Lemma~5.4.7]{Baird}. Then  setting $u_i=v_i=0$, $2\le i \le m$, in (\ref{isreducible}) we conclude that the resulted  specialization  $g:=z_0(u_1^2-v_1^2)+2z_1u_1v_1$ must be reducible too. The latter expression, however, cannot be reducible even over a bigger complex polynomial ring $\mathbb{C}[z_0,z_1,u_1,v_1]$ because the discriminant of $g$ with respect to $u_1$ is $4v_1^2(z_0^2+z_1^2)$, not a perfect square. The contradiction finishes the proof.
\end{proof}

\begin{definition}
We shall call $\Phi_{\mathcal{A}}(x)$  a \textit{Clifford minimal cubic}.
\end{definition}

It follows from the general theory of Clifford systems (see, for example, \cite{Shapiro}) that a necessary and sufficient condition for existence of a symmetric Clifford system $\mathcal{A}$ in $\R{2m}$ of cardinality $q$ is  that $q\le \rho(m)$, where $\rho$ is the Hurwitz-Radon function
\begin{equation}\label{foll}
\rho(2^s\cdot \mathrm{odd})=8a+2^b, \qquad \text{where} \;s={4a+b} , \;\; 0\leq b\le 3.
\end{equation}
Thus, an irreducible Clifford minimal cubic $\Phi_{\mathcal{A}}$  does exist in $\R{n}$  precisely if  equation $n=2m+q+1$ has a solution $(q,m)$ satisfying $1\le q\le \rho(m)$. In that case we say that $n$ is \textit{realizable}, and the pair $(q,m)$ is \textit{admissible} for the number $n$.

The above results yield a partial answer to Problem~1 posed by Hsiang in \cite{Hsiang67} on the existence of irreducible cubics in a given dimension $n\ge 4$.

\begin{corollary}\label{cor:non}
Let $n\ge 4$ and $n\not\equiv 1 \mod 16$. Then there is at least one irreducible radial minimal cubic  in $\R{n}$.
\end{corollary}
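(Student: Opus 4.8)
The plan is to combine Theorem~\ref{th1} with the Hurwitz--Radon existence criterion to realize all but two of the relevant dimensions by Clifford minimal cubics, and then to fill the two remaining gaps with the classical cubics of Section~1. By Theorem~\ref{th1}, every \emph{realizable} $n$ (one admitting a pair $(q,m)$ with $1\le q\le\rho(m)$ and $n=2m+q+1$) already carries an irreducible radial minimal cubic, of weight $-8|x|^2$. So the task reduces to the elementary question of which $n\ge4$ with $n\not\equiv1\pmod{16}$ are realizable.

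I would first dispose of even $n$: taking $q=1$ and $m=(n-2)/2\ge1$ works for every even $n\ge4$, since $\rho(m)\ge1$ always. For odd $n$ the index $q$ in $n=2m+q+1$ must be even, and here I would organize the argument by the $2$-adic valuation $k:=v_2(n-1)\ge1$. The key observation is that $n\not\equiv1\pmod{16}$ is exactly the condition $k\le3$. Assuming $k\le3$, set $q=2^{k}$ and $m=(n-1-2^{k})/2$; writing $n-1=2^{k}\cdot(\mathrm{odd})$ one computes $m=2^{k}o_1$ with $o_1$ an integer, so $v_2(m)\ge k$, whence $\rho(m)\ge 2^{k}=q$ (using $\rho(m)=2^{v_2(m)}$ when $v_2(m)\le3$ and $\rho(m)\ge9$ otherwise). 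Hence every such $n$ is realizable as soon as $m\ge1$.

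The only failure of $m\ge1$ occurs when $n-1=2^{k}$, i.e. $n=2^{k}+1$ with $k\le3$; among $n\ge4$ this leaves precisely the two dimensions $n=5$ ($k=2$) and $n=9$ ($k=3$). These are genuine gaps for the Clifford family, since for $n=5,9$ the bound $q\le\rho(m)$ fails for every splitting $n=2m+q+1$ with $m\ge1$. I would therefore treat them separately, using the Cartan cubic $f_1$ in $\R{5}$ (Example~\ref{ex2}, $d=1$) and the Hsiang cubic in $\R{9}$ (Example~\ref{ex3}). Both are irreducible minimal cubics, and both are \emph{radial}: the weight $\lambda$ is a quadratic form invariant under the large symmetry group of the cubic, hence a multiple of $|x|^2$. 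For the Hsiang cubic, which up to a nonzero constant is $\operatorname{tr}(Y^3)$ on the trace-free symmetric $4\times4$ matrices, the form $\lambda$ is invariant under $Y\mapsto UYU^{\TOP}$, and the only such invariant quadratic is $c\cdot\operatorname{tr}(Y^2)=c|x|^2$; for Cartan's cubic one has $\Delta f=0$ and $|\nabla f|^2=9|x|^4$, giving $\lambda=-54|x|^2$ directly.

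The main obstacle is not the Clifford estimate itself but correctly locating the exceptional set and verifying the \emph{radial} requirement. One must check that the Hurwitz--Radon count misses exactly $n=5$ and $n=9$ and nothing else among the admissible dimensions, and then confirm that the classical cubics supplied for those two dimensions are radial rather than merely minimal. The $2$-adic bookkeeping is what makes the exceptional set transparent: the obstruction $n=2^k+1$ aligns precisely with the excluded residues $n\equiv1\pmod{16}$ (at $k\ge4$) and the two small gaps (at $k=2,3$), so no admissible $n$ other than $5$ and $9$ is lost.
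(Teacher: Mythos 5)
Your proof is correct and follows the same overall strategy as the paper: Theorem~\ref{th1} plus the Hurwitz--Radon bound handles every realizable dimension, and the two genuine gaps $n=5,9$ are plugged by the Cartan and Hsiang cubics. Where you differ is in the arithmetic bookkeeping for odd $n$: the paper splits into residues $s=n\bmod 16\in\{3,5,7,9\}$, $\{11,15\}$, $\{13\}$ and picks a different admissible pair $(q,m)$ in each case, whereas you use the single uniform choice $q=2^{k}$ with $k=v_2(n-1)\le 3$, $m=(n-1-2^k)/2=2^{k}\tfrac{o-1}{2}$, and the observation that $v_2(m)\ge k$ forces $\rho(m)\ge 2^k$. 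Your version is cleaner in that it exhibits the exceptional set transparently as $\{2^k+1:k\le3\}\cap[4,\infty)=\{5,9\}$ rather than verifying non-realizability for $n\le 16$ by inspection, though it sometimes produces a different admissible pair than the paper's (e.g.\ for $n=23$ you get $(q,m)=(2,10)$ versus the paper's $(6,8)$) --- both are valid. You also supply what the paper dismisses as ``easily shown'': the radiality of the two classical cubics. Your argument for the Cartan cubic ($\Delta f=0$, $|\nabla f|^2=9|x|^4$, hence $\lambda=-54|x|^2$ via Euler's identity) and the invariant-theory argument for the Hsiang cubic (the only $SO(4)$-conjugation-invariant quadratic on trace-free symmetric matrices is a multiple of $\trace Y^2=|x|^2$, and $\lambda$ inherits the invariance because $L$ is equivariant under orthogonal maps) are both sound and are a genuine improvement in completeness over the published proof.
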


\begin{proof}
It easily follows from the definition of the Hurwitz-Radon function that $\rho(2^sk)\ge 2^s$ for $s=0,1,2,3$ and any integer $k\ge1$.

If $n$ is even then $(q,m)=(1,\frac{n-2}{2})$ is admissible for $n$ because $\rho(m)\ge1$ for any $m\ge1$. Thus any even $n\ge 4$ is realizable.

Now assume that $n$ is odd. A simple verification shows that there is exactly two non-realizable values of $n$ for $n\le 16$, namely $n=5$ and $n=9$. On the other hand, in $\R{5}$ there is an isoparametric minimal cubic considered in Example~\ref{ex2} and in $\R{9}$ the Hsiang minimal cubic given in Example~\ref{ex3}. These cubics are easily shown to be radial.

Thus, we can suppose that $n$ is odd and $n\ge 16$. Write $n=16k+s$, where $k\ge 1$, $s$ is odd and $s\le 15$.
If  $s\in\{3,5,7,9\}$ then the pair $(s-1,\frac{n-s}{2})\equiv (s-1,8k)$ is admissible for $n$ because one has $\rho(8k)\ge 8$.

If $s\in\{11,15\}$ then $n$ has the form $n=4p+3$, hence the pair $(2,\frac{n-3}{2})\equiv (2,2p)$ is admissible for $n$ because $\rho(2p)\ge 2$. If $s=13$ then $n$ has the form $n=8p+5$, hence the pair $(4,\frac{n-5}{2})\equiv (4,4p)$ is admissible for $n$ because $\rho(4p)\ge 4$. The corollary is proved.
\end{proof}

\begin{remark}
A more delicate argument shows that some terms of the exceptional sequence $n=16k+1$ are really non-realizable (for example, those corresponding to $k\le 2^7=128$). On the other hand, all terms of the form $n=2^{11}\,p+17$, $p\ge 1$, are realizable.
\end{remark}

\begin{conjecture}
There is no irreducible radial minimal cubics in $\R{17}$.
\end{conjecture}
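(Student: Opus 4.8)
This statement is a conjecture, so what follows is a proposed line of attack rather than a finished argument. The plan is to convert the differential condition (\ref{Lgamma1}) into a purely algebraic identity on a commutative nonassociative algebra, and then to exclude the dimension $17$ by a Peirce-type spectral analysis built around an idempotent.

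First I would pass from the cubic $f$ to its full polarization: the symmetric trilinear form $T$ with $T(x,x,x)=6f(x)$ defines a commutative product $\bullet$ on $\R{17}$ by $\scal{a\bullet b}{c}=T(a,b,c)$, so that $\scal{a\bullet b}{c}$ is totally symmetric (the metric is associative). A direct computation gives $\nabla f=\tfrac12\,x\bullet x$, while the Hessian of $f$ at $x$ is the self-adjoint multiplication operator $M_x:v\mapsto v\bullet x$; hence $\Delta f=\scal{N}{x}$ with $N:=\sum_i e_i\bullet e_i$ the fixed trace vector, and $f$ is harmonic exactly when $N=0$. Feeding these into (\ref{Lgamma}) and imposing (\ref{eigg}) turns the radial minimal condition, in the harmonic case, into the single identity $\scal{(x\bullet x)\bullet(x\bullet x)}{x}=-\tfrac{2c}{3}\,|x|^2\,\scal{x\bullet x}{x}$, valid for all $x$. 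The non-harmonic case would be treated by carrying the extra term $\tfrac14|x\bullet x|^2\scal{N}{x}$ coming from $\Delta f$; reducing to $N=0$ is itself one of the delicate points below.

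The key geometric input is the maximum-point construction already used in Lemma~\ref{lem:milnor}: at a maximizer $e$ of $f$ on the unit sphere one has $\nabla f(e)=3f(e)\,e$, i.e. $e\bullet e=6f(e)\,e$, so after rescaling the product $e$ is an \emph{idempotent}, $e\bullet e=e$ (this normalization also forces a definite value of the weight $c$). Then $M_e$ is self-adjoint with $M_e e=e$, and polarizing the algebraic identity in the direction $e$ should confine the spectrum of $M_e$ to a small explicit set of Peirce eigenvalues $\{1,\mu_1,\dots,\mu_r\}$, as for axial idempotents in Jordan-type algebras. Writing $n_\mu=\dim\ker(M_e-\mu\,I_{\R{17}})$, the dimensions are then constrained by a short system of trace identities: $\sum_\mu n_\mu=17$, $\trace M_e=\sum_\mu\mu\,n_\mu=\scal{N}{e}$, $\trace M_e^2=\sum_\mu\mu^2 n_\mu=\|T(\cdot,e,\cdot)\|^2$, together with the value of $\trace M_e^3$ encoding $c$. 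In the harmonic case the second reads $\sum_\mu\mu\,n_\mu=0$; with the spectrum and the normalizing constant fixed this is an overdetermined linear system, and the goal is to show it has \emph{no} nonnegative integer solution (with $n_1\ge1$) when $n=17$.

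The main obstacle is twofold. Pinning down the exact Peirce spectrum and the constant in $\trace M_e^2$ demands a careful expansion of the quintic identity, and — more seriously — the trace relations alone may not be sharp enough: one expects the largest Peirce eigenspace to carry the structure of a module over a Clifford algebra $\mathrm{Cliff}(\R{2\ell},q)$, forcing its dimension to be a multiple of a Hurwitz–Radon number. This is exactly the mechanism that should reproduce the exceptional congruence $n\equiv1\bmod16$ of Corollary~\ref{cor:non} for \emph{arbitrary} radial minimal cubics rather than only the Clifford ones, and for $n=17$ (where $n-1=16$ is minimal) the resulting divisibility, combined with $\rho$ being too small, should yield the contradiction. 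The genuinely hard part, and the reason this remains a conjecture, is that one must also dispose of the non-harmonic case and of any sporadic algebras outside the Clifford family — the Cartan and Hsiang cubics of Examples~\ref{ex2}--\ref{ex3} show that such sporadic solutions really do occur in neighboring dimensions — and excluding these in dimension $17$ amounts to a full classification of radial eigencubics, which is not available here.
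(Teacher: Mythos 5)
The statement you are addressing is stated in the paper only as a \emph{conjecture}; the paper contains no proof of it, so there is no argument of the author's to compare yours against. What you have written is, by your own admission, a research program rather than a proof, and it does not close. The decisive steps are precisely the ones left as hopes. First, the claim that polarizing the quintic identity $\scal{(x\bullet x)\bullet(x\bullet x)}{x}=-\tfrac{2c}{3}|x|^2\scal{x\bullet x}{x}$ at an idempotent $e$ confines the spectrum of $M_e$ to a small explicit set is asserted by analogy with Jordan-type algebras but never derived; one would have to extract an actual polynomial identity satisfied by $M_e$ on $e^{\perp}$ from a degree-by-degree analysis of the polarized identity, and nothing guarantees a priori that the spectrum is finite and universal. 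Second, the trace relations $\sum_\mu n_\mu=17$, $\sum_\mu\mu\,n_\mu=0$, $\sum_\mu\mu^2 n_\mu=\mathrm{const}$ amount to only three linear conditions on the multiplicities; for a spectrum with three or more eigenvalues such a system generically does admit nonnegative integer solutions, so the argument cannot terminate there, and the extra Clifford-module divisibility you invoke is introduced with ``one expects'' rather than a derivation. Third, the reduction to the harmonic case $N=0$ is flagged as delicate and left open.

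More fundamentally, the paper's own Examples~\ref{ex2} and~\ref{ex3} defeat the mechanism as you state it: radial minimal cubics exist in $\R{5}$ and $\R{9}$, which are exactly the low dimensions where no Clifford cubic exists (see the proof of Corollary~\ref{cor:non}), so whatever Hurwitz--Radon divisibility a Peirce eigenspace inherits from a Clifford-module structure, that divisibility alone cannot exclude a dimension --- there are sporadic radial cubics outside the Clifford family. To settle $\R{17}$ one must additionally rule out a sporadic (Cartan- or Hsiang-type) cubic in that particular dimension, and nothing in your outline addresses this; as you yourself note, doing so amounts to a classification of radial eigencubics, which is not carried out. The framework you set up --- the metrised commutative algebra attached to the polarization, the idempotent obtained from the spherical maximum exactly as in the proof of Lemma~\ref{lem:milnor}, and the Peirce-type spectral analysis --- is a sensible and indeed the natural way to attack this conjecture, but the text as written establishes nothing about $\R{17}$.
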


\begin{example}\label{ex4}
For any integer $m\ge 1$, the following matrices define a Clifford system on $\R{2m}$:
$$
A_0=\left(
        \begin{array}{ll}
          I_{\R{m}} & \phantom{-}0 \\
          0 & -I_{\R{m}}\\
        \end{array}
      \right),
\qquad
A_1=\left(
        \begin{array}{ll}
          0 & I_{\R{m}} \\
          I_{\R{m}} & 0\\
        \end{array}
      \right).
$$
Setting $y=(x_1,\ldots, x_{2m})$, $z=(x_{2m+1},x_{2m+2})$, we obtain by  Theorem~\ref{th1} the following irreducible radial minimal cubic in $\R{2m+2}$:
$$
\Phi_{\mathcal{A}}=x_{2m+1}(x_1^2+\ldots+x_{m}^2-x_{m+1}^2-\ldots-x_{2m}^2)+2x_{2m+2}(x_1x_{m+1}+\ldots+x_{m}x_{2m}).
$$
For $m=1$, we get the Lawson minimal cubic mentioned in Example~\ref{ex1} above.
\end{example}

\section{Congruent Clifford minimal cubics}
\label{sec:4}

Recall that two cones are called congruent if they agree under an orthogonal endomorphism in $\R{n}$.
After the new examples of minimal cubics were shown to exist, a natural question is to characterize all congruent Clifford minimal cubics in $\R{n}$. The aim of this section is to show that the congruence classes of Clifford minimal cubics are in one to one correspondence with those of the geometrically equivalent Clifford systems.

We begin with mentioning several well-known basic facts about representation theory of Clifford systems \cite[Section~5.5]{Baird}. Recall that two  Clifford systems $\mathcal{A}=(A_0,\ldots, A_q)$ and $\mathcal{B}=(B_0,\ldots, B_q)$ in  $\mathrm{Cliff}(\R{2m},q)$ are called \textit{geometrically equivalent} if there is an orthogonal endomorphism $a$ of $\R{2m}$ such that
\begin{equation}\label{geom1}
S(\mathcal{A})=a^\TOP \, S(\mathcal{B})\, a,
\end{equation}
where $S(\mathcal{A})=\{A_z: |z|=1\}$ is the unit sphere in the $\mathrm{span}(A_0,\ldots, A_q)$.

A Clifford system $\mathcal{A}$ on $\R{2m}$ is called irreducible if it is not possible to write $\R{2m}$ as a direct sum of two non-trivial subspaces that are invariant under all of the $A_i$. Then it is well known that
\begin{itemize}
\item
each Clifford system is geometrically equivalent to a direct sum of irreducible Clifford systems;
\item
an irreducible Clifford system $(A_0,\ldots, A_q)$ on $\R{2m}$ exists precisely when $m=2^s$ and
\begin{equation}\label{rhh}
\rho(\frac{m}{2})<q\le \rho(m),
\end{equation}
where $\rho$ is the Hurwitz-Radon function (\ref{foll}).
In particular, for small values $q$, the possible pairs $(q,m)$ are
\begin{table}[ht]
\renewcommand\arraystretch{1.5}
\noindent\[
\begin{array}{|c|c|c|c|c|c|c|c|c|c|c|c|}
\hline
\; q \;&1 &2 &3 &4 &5 &6 &7 &8 &9 &10 &11 \\
\hline
\;m \;& 1 & 2 & 4 &4 &8 &8 &8 &8 &16 &32 &64\\\hline
\end{array}
\]
\end{table}
\item
For any $q\ge 1$ there is exactly one class of geometrically equivalent irreducible systems.
\end{itemize}

Our first observation is the following formula for the cardinality $q$ of the Clifford system $\mathcal{A}$ in $\Phi_{\mathcal{A}}$.

\begin{proposition}\label{pro:tau}
\begin{equation}\label{tau}
\tau(\Phi_{\mathcal{A}}):=\frac{|x|^2\trace H^3(\Phi_{\mathcal{A}})}{24L(\Phi_{\mathcal{A}})}=q-1,
\end{equation}
where $H(f)$ denotes the Hessian matrix of $f$.
\end{proposition}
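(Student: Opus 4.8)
The strategy is to compute the two ingredients $\trace H^3(\Phi_{\mathcal{A}})$ and $L(\Phi_{\mathcal{A}})$ directly from the explicit structure of $\Phi_{\mathcal{A}}=y^\TOP A_z y$ and the Clifford relations, then form the ratio. Since Theorem~\ref{th1} already gives $L(\Phi_{\mathcal{A}})=-8|x|^2\Phi_{\mathcal{A}}$, the denominator is known, and the whole problem reduces to evaluating $\trace H^3$.

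First I would assemble the Hessian $H(\Phi)$ in block form according to the splitting $x=(y,z)\in\R{2m}\oplus\R{q+1}$. From the first derivatives in (\ref{gradf}) one reads off the second derivatives: the $yy$-block is $\Phi_{y_iy_j}=2\,e_i^\TOP A_z e_j$, i.e.\ the block is $2A_z$; the mixed $y z_k$-block is $\Phi_{y_i z_k}=2\,e_i^\TOP A_k y$, i.e.\ the column vector $2A_k y$; and the $z z$-block vanishes since $\Phi$ is linear in $z$. Thus
\begin{equation}\label{hessblock}
H(\Phi)=2\begin{pmatrix} A_z & B \\ B^\TOP & 0 \end{pmatrix},
\end{equation}
where $B$ is the $2m\times(q+1)$ matrix whose $k$-th column is $A_k y$.

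Next I would compute $\trace H^3$ by expanding the cube of the block matrix in (\ref{hessblock}). Up to the overall factor $8$ from $2^3$, the trace of the cube of $\left(\begin{smallmatrix} A_z & B \\ B^\TOP & 0\end{smallmatrix}\right)$ splits into $\trace A_z^3 + 3\,\trace(A_z B B^\TOP)$ (the purely off-diagonal term $\trace$ of a block product vanishes by the structure). The first piece is handled by (\ref{cube}), giving $\trace A_z^3=|z|^2\trace A_z=0$ since every $A_i$ is trace free by Remark~\ref{r1}. For the second piece I would note that $BB^\TOP=\sum_{k=0}^q (A_k y)(A_k y)^\TOP$, so that $\trace(A_z B B^\TOP)=\sum_k y^\TOP A_k A_z A_k y$. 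Using the Clifford relations (\ref{Cliffsys}) to push $A_k$ through $A_z$—specifically $A_k A_i A_k = (2\delta_{ki}-A_iA_k)A_k = 2\delta_{ki}A_k - A_i$ for the off-diagonal generators, together with $A_k^2=I$—I would collect the diagonal and cross terms. The expected outcome is a multiple of $y^\TOP A_z y=\Phi$ with a coefficient linear in $q$; combining with the prefactors this should yield $\trace H^3(\Phi)=24(q-1)\,\Phi$, which divided by $24L(\Phi)=24\cdot(-8)|x|^2\Phi$ and multiplied by $|x|^2$ gives the stated value $q-1$ (the sign conventions will fix themselves once the Clifford simplification is carried out carefully).

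The main obstacle is the bookkeeping in $\sum_{k=0}^q y^\TOP A_k A_z A_k y$. The term where $k$ matches the summation index inside $A_z=\sum_i z_i A_i$ contributes $+A_i$, while the $q$ mismatched terms contribute $-A_i$; tracking how many of each arise for each fixed $i$ is exactly what produces the factor $q-1$ rather than $q$ or $q+1$. I expect this anticommutation accounting to be the only genuinely delicate step; everything else is substitution of facts already established in Theorem~\ref{th1} and Remark~\ref{r1}.
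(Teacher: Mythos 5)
Your plan is essentially identical to the paper's proof: the same block Hessian, the same splitting $\trace H^3=8\trace A_z^3+6\trace(A_zBB^\TOP)$ with the first term vanishing by $A_z^3=|z|^2A_z$ and tracelessness, and the same reduction to $\sum_{k=0}^q A_kA_zA_k$ via the anticommutation relations. The only slip is the sign you left to "fix itself": one gets $A_kA_iA_k=2\delta_{ik}A_k-A_i$, hence $\sum_{k=0}^q A_kA_zA_k=(1-q)A_z$ and $\trace H^3(\Phi)=24(1-q)\Phi$, which is what the paper obtains.
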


\begin{proof}
Write the Hessian matrix of $\Phi:=\Phi_{\mathcal{A}}$ in the block form
$$
H(\Phi)\equiv
\left(
        \begin{array}{ll}
          \Phi_{yy} & \Phi_{yz} \\
          \Phi_{zy} & \Phi_{zz}\\
        \end{array}
      \right)
=\left(
        \begin{array}{ll}
          2A_z\,\, & Q \\
          Q^\TOP& 0 \\
        \end{array}
      \right),
$$
where $Q$ is the matrix with entries $Q_{ij}= \Phi_{y_iz_j}=2e_i^{\TOP}A_j y$.
Thus
\begin{equation}\label{traceformula}
\trace H^3(\Phi)=8\trace A_z^3+6\trace A_zQQ^{\TOP}.
\end{equation}
By virtue of (\ref{cube}) and Remark~\ref{r1}, the first term in (\ref{traceformula}) is zero. In order to determine the second term, we note that
\begin{equation}\label{byv}
\begin{split}
\trace A_zQQ^{\TOP}&=\sum_{i,j=1}^{2m}(A_z)_{ij}(QQ^{\TOP})_{ij}=\sum_{i,j=1}^{2m}(A_z)_{ij}\sum_{k=0}^{q} Q_{ik}Q_{jk}\\
&=4\sum_{i,j=1}^{2m}(A_z)_{ij}\sum_{k=0}^{q}e_i^{\TOP}A_k y\cdot e_j^{\TOP}A_k y\\
&=4\sum_{k=0}^{q}y^{\TOP}A_k(\sum_{i,j=1}^{2m}(A_z)_{ij}\,e_i\cdot e_j^{\TOP})A_ky\\
&=4\sum_{k=0}^{q}y^{\TOP}A_kA_zA_ky.
\end{split}
\end{equation}
Note also that $A_k^3=A_k$, hence on applying (\ref{linear}) we obtain
\begin{equation*}
\begin{split}
\sum_{k=0}^{q}A_kA_zA_k&=\sum_{k=0}^{q}(A_kA_z+A_zA_k)A_k-\sum_{k=0}^{q}A_zA_k^2\\
&=2\sum_{k=0}^{q}z_kA_k^3-(q+1)A_z=(1-q)A_z,\end{split}
\end{equation*}
which by virtue of (\ref{byv})  and (\ref{traceformula}) yields
\begin{equation}\label{formula}
\trace H^3(\Phi)=24(1-q)\Phi.
\end{equation}
The latter relation yields the required formula (\ref{tau}) by virtue of (\ref{radialf}).
\end{proof}

\begin{corollary}\label{cor:tau}
Consider two Clifford minimal cubics in $\R{n}$
\begin{equation}\label{two}
\Phi_{\mathcal{A}}(x)=\sum_{i=0}^q z_i y^\TOP A_iy,
\quad \Phi_{\mathcal{B}}(X)=\sum_{j=0}^Q Z_i Y^\TOP B_iY,
\end{equation}
where $x=(y,z)\in \R{2m}\oplus \R{q}\approx \R{n}$, $X=(Y,Z)\in \R{2M}\oplus \R{Q}\approx \R{n}$,
and $\mathcal{A}\in \mathrm{Cliff}(\R{2m},q)$, $\mathcal{B}\in \mathrm{Cliff}(\R{2M},Q)$.

If the cubics $\Phi_{\mathcal{A}}(x)$ and $\Phi_{\mathcal{B}}(X)$ are congruent then $q=Q$.
\end{corollary}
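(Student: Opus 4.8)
The plan is to exploit the invariant $\tau$ from Proposition~\ref{pro:tau}, which extracts the cardinality of a Clifford system directly from the geometry of its associated cubic. Since $\tau(\Phi_{\mathcal{A}})=q-1$ and $\tau(\Phi_{\mathcal{B}})=Q-1$ by that proposition, it suffices to show that $\tau$ is a \emph{congruence invariant}: that is, if $\Phi_{\mathcal{A}}$ and $\Phi_{\mathcal{B}}$ are congruent, then $\tau(\Phi_{\mathcal{A}})=\tau(\Phi_{\mathcal{B}})$. Once this is established, the equality $q-1=Q-1$ forces $q=Q$ immediately.

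The core of the argument is therefore to verify that every ingredient in the definition
\[
\tau(f)=\frac{|x|^2\,\trace H^3(f)}{24\,L(f)}
\]
transforms correctly under a congruence $g(x)=c\,f(Ux)$ with $U$ orthogonal and $c\neq0$. First I would treat the scaling by $c$: since $H(g)=c\,U^\TOP H(f)(Ux)\,U$ by the chain rule, we get $\trace H^3(g)=c^3\trace H^3(f)(Ux)$, and similarly $L$ is homogeneous of degree $3$ in $f$ by inspection of its defining formula, so $L(g)=c^3 L(f)(Ux)$; the factor $c^3$ cancels in the ratio. Next I would handle the orthogonal substitution: the factor $|x|^2$ is rotation invariant, and the key point is that $\trace H^3$ and $L$ are themselves invariant under orthogonal changes of variable. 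For $L$ this is already recorded in the paper (the operator $L$ is orthogonally invariant, \cite[Lemma~2]{Hsiang67}). For $\trace H^3$, orthogonality gives $H(f\circ U)=U^\TOP H(f)\,U$, whence $H^3(f\circ U)=U^\TOP H^3(f)\,U$ and the trace is unchanged by cyclicity. Combining these, $\tau(g)(x)=\tau(f)(Ux)$ as scalar functions.

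The remaining subtlety is that $\tau$, as written, is a priori a ratio of polynomials, hence a rational function of $x$, not an obvious constant; but Proposition~\ref{pro:tau} tells us that for a Clifford minimal cubic this rational function is in fact the constant $q-1$ identically. So both $\tau(\Phi_{\mathcal{A}})\equiv q-1$ and $\tau(\Phi_{\mathcal{B}})\equiv Q-1$ are constants, and the congruence-invariance relation $\tau(\Phi_{\mathcal{B}})(x)=\tau(\Phi_{\mathcal{A}})(Ux)$ reduces to the numerical identity $Q-1=q-1$. I expect the main obstacle to be purely bookkeeping: confirming that $\trace H^3$ and $L$ scale by exactly the same power $c^3$ under multiplication by the constant $c$, so that the nontrivial dimensional factor truly cancels and leaves a scalar that cannot depend on the representative chosen within a congruence class. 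No deeper structural fact about the Clifford systems themselves is needed for this direction — the invariant $\tau$ does all the work.
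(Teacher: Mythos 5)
Your proposal is correct and follows essentially the same route as the paper: the author likewise observes that $\tau$ is invariant under orthogonal substitutions (citing the orthogonal invariance of $L$ and of $\trace H^3$) and under rescaling, and then reads off $q-1=Q-1$ from Proposition~\ref{pro:tau}. The only cosmetic difference is that the paper phrases the scaling invariance as $\tau(f(cx))=\tau(f(x))$ while you verify directly that multiplying $f$ by $c$ scales both $\trace H^3$ and $L$ by $c^3$; for real cubics these are equivalent.
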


\begin{proof}
Notice that for any homogeneous cubic polynomial $f$, $\tau(f)$ is an invariant operator under orthogonal transformations and dilatations:
\begin{equation*}\label{inva}
 \tau(f(x))=\tau(f(U(x))), \qquad
 \tau(f(cx))=\tau(f(x)).
\end{equation*}
Indeed, the first property follows from the facts that both the operator $L$ (see, for example,  \cite[Lemma~2]{Hsiang67}) and the trace $\trace H^3(f)$ are invariant under orthogonal transformations. The second property follows from the homogeneity of $f$.
Thus $\tau(\Phi_{\mathcal{A}})=\tau(\Phi_{\mathcal{B}})$, which implies by (\ref{tau}) that $q=Q$, the corollary  is proved.
\end{proof}

\begin{theorem}[Congruence  criteria]\label{th:clif}
The Clifford cubics $\Phi_{\mathcal{A}}(x)$ and $\Phi_{\mathcal{B}}(X)$ given by (\ref{two}) are congruent in $\R{n}$ if and only
the  Clifford systems $\mathcal{A}$ and $\mathcal{B}$ are geometrically equivalent.

\end{theorem}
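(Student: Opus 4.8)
The plan is to anchor both implications in a single structural fact: the orthogonal splitting $\R{n}=\R{2m}\oplus\R{q+1}$ underlying the representation $\Phi_{\mathcal{A}}(y,z)=y^{\TOP}A_zy$ is encoded intrinsically in the cubic, so that any congruence must respect it up to the orthogonal freedom on each factor. By Corollary~\ref{cor:tau} we already know $q=Q$, whence $m=M$; thus both systems lie in $\mathrm{Cliff}(\R{2m},q)$ and both cubics sit in the same $\R{n}$. The easy direction is then immediate: geometric equivalence $S(\mathcal{A})=a^{\TOP}S(\mathcal{B})a$ means (using $\trace A_z^2=2m|z|^2$, so that the induced map on parameters is norm preserving) that there are $a\in O(2m)$ and $C\in O(q+1)$ with $aA_za^{\TOP}=B_{Cz}$ for all $z$; setting $U=\mathrm{diag}(a,C)\in O(n)$ gives $\Phi_{\mathcal{A}}(y,z)=y^{\TOP}a^{\TOP}B_{Cz}ay=\Phi_{\mathcal{B}}(ay,Cz)=\Phi_{\mathcal{B}}(Ux)$.

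For the converse I would recover the splitting from a congruence invariant. Given $\Phi_{\mathcal{A}}(x)=c\,\Phi_{\mathcal{B}}(Ux)$, the Hessians satisfy $H(\Phi_{\mathcal{A}})(u)=c\,U^{\TOP}H(\Phi_{\mathcal{B}})(Uu)U$, so the collection
$$\mathcal{N}(\Phi)=\{L\subseteq\R{n}\text{ linear}:\ v^{\TOP}H(\Phi)(u)w=0\ \text{for all }u\in\R{n},\ v,w\in L\}$$
is carried by $U$ from $\Phi_{\mathcal{A}}$ to $\Phi_{\mathcal{B}}$. Using the block form of $H(\Phi_{\mathcal{A}})$ one checks that $L\in\mathcal{N}(\Phi_{\mathcal{A}})$ forces $A_zy=0$ for every element $(y,z)$ of $L$; since $A_z$ is invertible for $z\ne0$, each element lies in $\R{2m}\oplus0$ or in $0\oplus\R{q+1}$, and a subspace contained in the union of two subspaces lies in one of them. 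Hence the maximal members of $\mathcal{N}(\Phi_{\mathcal{A}})$ are precisely $L_{\mathcal{A}}:=0\oplus\R{q+1}$ and the maximal subspaces of $\R{2m}$ that are totally isotropic for all the forms $y\mapsto y^{\TOP}A_ky$. To single out $L_{\mathcal{A}}$ I compare Hessians at interior points: for $0\ne v\in L_{\mathcal{A}}$ we have $H(\Phi_{\mathcal{A}})(v)=\mathrm{diag}(2A_z,0)$, of rank $2m$ and signature $(m,m)$ since $A_z^2=|z|^2I_{\R{2m}}$ is trace free, whereas at a point of a totally isotropic subspace the Hessian is the off-diagonal block matrix of rank $2(q+1)$ and signature $(q+1,q+1)$. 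When $m\ne q+1$ these invariants differ, so $U$ must map $L_{\mathcal{A}}$ onto $L_{\mathcal{B}}$, and being orthogonal it then maps $\R{2m}\oplus0$ onto $\R{2M}\oplus0$.

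The hard part is exactly the degenerate case $m=q+1$, which occurs for the Hurwitz--Radon values $q\in\{1,2,3,7,\dots\}$: there a maximal totally isotropic subspace has dimension $m=q+1=\dim L_{\mathcal{A}}$ and the same Hessian signature, so the quadratic invariants above cannot separate the two families. I would treat this case by hand, exploiting the additional symmetry of $\Phi_{\mathcal{B}}$ available in these small Clifford dimensions: one exhibits an orthogonal self-congruence of $\Phi_{\mathcal{B}}$ interchanging $L_{\mathcal{B}}$ with a maximal totally isotropic subspace, so that after composing $U$ with it one may again assume $UL_{\mathcal{A}}=L_{\mathcal{B}}$. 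This is the step I expect to require the most care.

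Once the splitting is preserved, write $U=\mathrm{diag}(a,C)$ with $a\in O(2m)$ and $C\in O(q+1)$. Comparing $y^{\TOP}A_zy=c\,y^{\TOP}(a^{\TOP}B_{Cz}a)y$ and invoking symmetry of both sides gives $A_z=c\,a^{\TOP}B_{Cz}a$, i.e. $aA_za^{\TOP}=c\,B_{Cz}$; taking traces of squares yields $2m|z|^2=c^2\,2m|z|^2$, so $c=\pm1$. Therefore $aS(\mathcal{A})a^{\TOP}=\{c\,B_{Cz}:|z|=1\}=\{B_{\pm Cz}:|z|=1\}=S(\mathcal{B})$, which is the assertion that $\mathcal{A}$ and $\mathcal{B}$ are geometrically equivalent, completing the proof.
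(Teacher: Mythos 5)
Your ``if'' direction and your final step (recovering $A_z=\pm\,a^{\TOP}B_{Cz}a$ once $U$ is known to be block diagonal) are correct, and for $m\ne q+1$ your ``only if'' argument is a valid alternative route: recovering the splitting $\R{2m}\oplus\R{q+1}$ from the trilinear form $v^{\TOP}H(\Phi)(u)w$, classifying the maximal members of $\mathcal{N}(\Phi)$, and separating the two families by the rank of the Hessian ($2m$ at points of $0\oplus\R{q+1}$ versus $2(q+1)$ at points of a totally isotropic subspace) all checks out. The genuine gap is the degenerate case $m=q+1$, which you flag but do not close. This case is not vacuous: the constraint $q\le\rho(q+1)$ leaves exactly $(q,m)\in\{(1,2),(3,4),(7,8)\}$, i.e.\ $n=6,12,24$, and $n=6$ is already the cubic of Example~\ref{ex4} with $m=2$. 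There your rank and signature invariants genuinely fail to distinguish $L_{\mathcal{A}}$ from a maximal isotropic subspace, and the repair you propose --- an orthogonal self-congruence of $\Phi_{\mathcal{B}}$ carrying an arbitrary maximal totally isotropic subspace of $\R{2m}$ onto $L_{\mathcal{B}}$ --- is itself a nontrivial claim that is the whole content of the remaining case. It does hold for $n=6$ (writing $\Phi=\re(\bar\zeta\,pr)$ with $p=w_1+\I w_2$, $r=w_1-\I w_2$ exhibits an $S_3$ of such symmetries, and one checks the only maximal isotropic subspaces are $\{p=0\}$ and $\{r=0\}$), but for $n=12$ and $n=24$ you would need quaternionic and octonionic analogues, and you must also verify transitivity of the symmetry group on \emph{all} maximal isotropic subspaces that could arise as $UL_{\mathcal{A}}$, not just on one of them. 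As written, the proof is incomplete precisely in these three dimensions.

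The paper avoids the dichotomy entirely, and its device is worth noting: from $\Phi_{\mathcal{A}}(x)=\Phi_{\mathcal{B}}(Ux)$ it compares the first-order invariant $|\nabla\Phi_{\mathcal{A}}|^2=4|y|^2|z|^2+\sum_i(y^{\TOP}A_iy)^2$ with its image under $U$, restricts to $y=0$ to obtain $4|bz|^2|dz|^2+\sum_i((bz)^{\TOP}B_i(bz))^2\equiv0$, and uses that the real polynomial ring has no zero divisors to force $b\equiv0$ or $d\equiv0$; the alternative $d\equiv0$ is then eliminated by a short computation with the orthogonality relations of $U$. This is uniform in all dimensions and requires no analysis of isotropic subspaces. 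The most economical repair of your argument is to substitute this gradient-norm comparison for the Hessian-rank step in the case $m=q+1$ (at which point the Hessian machinery becomes optional).
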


\begin{proof}
We first prove the "if" part. Then $q=Q$, and $(A_0,\ldots, A_q)$ and $(B_0,\ldots, B_q)$ are geometrically equivalent. Denote by $a$ the corresponding orthogonal endomorphism  of $\R{2m}$  in (\ref{geom1}). Then
\begin{equation}\label{orthog1}
A_i =a^\TOP B_{w_i} a,\qquad 0\le i\le q,
\end{equation}
where $|w_i|=1$, $w_i\in \R{q+1}$. We have $B_{w_i}=\sum_{k=0}^q w_{ik} B_k$, where
$\sum_{k=1}^q w_{ik}^2=1$. By virtue of (\ref{orthog1}) and (\ref{Cliffsys}),
\begin{eqnarray}
2\delta_{ij}I_{\R{2m}}&=&A_iA_j+A_jA_i=\sum_{k,l=0}^q w_{ik}w_{jl}\cdot a^\TOP B_kB_la \nonumber\\
&=&2I_{\R{2m}}\sum_{k,l=0}^q w_{ik}w_{jl}\delta_{kl}=2I_{\R{2m}}\sum_{k=0}^q w_{ik}w_{jk},\nonumber
\end{eqnarray}
thus $d:=(w_{ij})_{0\le i,j\le q}$ is an orthogonal matrix. Furthermore, by (\ref{orthog1})
\begin{equation}\label{orthog2}
A_z \equiv \sum_{k=0}^q z_{i} A_i =a^\TOP(\sum_{k=0}^q w_{ik}z_iB_k)a\equiv a^\TOP B_{dz}a,
\end{equation}
and thus $y^\top A_z y=(a y)^\TOP\, B_{dz}\, a y$. Write $\R{2m+q+1}\cong V_y\oplus V_z$ according to the vector decomposition $x=y\oplus z$. Then  $\Phi_{\mathcal{A}}(x)=\Phi_{\mathcal{B}}(Ux)$, where $U$ is  an orthogonal endomorphism of $V_y\oplus V_z$:
$$
U=\left(
        \begin{array}{cc}
          a & 0 \\
          0 & d \\
        \end{array}
      \right),
$$
which yields that $\Phi_{\mathcal{A}}$ and $\Phi_{\mathcal{B}}$ are congruent.

Conversely, suppose $\Phi_{\mathcal{A}}$ and $\Phi_{\mathcal{B}}$ are congruent Clifford minimal cubics given by (\ref{two}). Then $q=Q$ by Corollary~\ref{cor:tau}, so that the congruence relation (after scaling by a constant factor, if needed) reads as
\begin{equation}\label{EQUI}
\Phi_\mathcal{A}(x)=\Phi_\mathcal{B}(Ux),
\end{equation}
where $U$ is an orthogonal endomorphism of $\R{2m+q+1}$. Let $\R{2m+q+1}=V_y\oplus V_z$ be the decomposition associated with $\mathcal{A}$ and $x=y+ z$. Then $U$ can be written in the block form as follows
$$
U=\left(
        \begin{array}{ll}
          a & b \\
          c & d\\
        \end{array}
      \right), \qquad UU^\TOP=U^\TOP U=I_{\R{n}}.
$$
It follows from (\ref{EQUI}) that
\begin{equation}\label{findfrom}
|\nabla \Phi_\mathcal{A}|^2|_{(y,z)} = |\nabla \Phi_\mathcal{B}|^2|_{U(y,z)}.
\end{equation}
By using (\ref{gradf}),
$$
|\nabla \Phi_\mathcal{A}|^2 = 4|A_z y|^2+\sum_{i=0}^q (y^\TOP A_i y)^2=4|y|^2|z|^2+\sum_{i=0}^q (y^\TOP A_i y)^2,
$$
and on applying (\ref{findfrom}), the relation (\ref{findfrom}) reads as follows:
\begin{equation}\label{nab}
4|y|^2|z|^2+\sum_{i=0}^q (y^\TOP A_i y)^2=
4|ay+bz|^2|cy+dz|^2+\sum_{i=0}^q ((ay+bz)^\TOP B_i (ay+bz))^2.
\end{equation}
Setting $z=0$ in the latter identity yields $4|bz|^2|dz|^2+\sum_{i=0}^q ((bz)^\TOP B_i (bz))^2=0$, which is equivalent to the system
\begin{eqnarray}\label{nab1}
&&4|bz|^2|dz|^2=0,\\
\label{nab2}
&&(bz)^\TOP B_i (bz)=0, \quad 0\le i\le q.
\end{eqnarray}
Since the polynomial ring over $\R{}$ contains no zero divisors, (\ref{nab1}) implies that either (i) $bz\equiv 0$ or (ii) $dz\equiv 0$.

Consider first (ii). Then $d\equiv 0$, and (\ref{nab2}) additionally yields for any $i$, $0\le i\le q$, that $v^\TOP B_i v\equiv 0$ for any $v\in W:=b(V_z)$. Since $B_i$ is a symmetric endomorphism, we conclude that the restriction $B_i|_{W}\equiv 0$; in particular, $B_ibz\equiv 0$ for any $z\in V_z$. Since $d=0$, we find from the orthogonality relation $U^\TOP U=1_{\R{n}}$  that $a^\TOP b\equiv 0$. Thus (\ref{nab}) becomes
\begin{equation}\label{nab3}
4|y|^2|z|^2+\sum_{i=0}^q (y^\TOP A_i y)^2=
4(|ay|^2+|bz|^2)|cy|^2+\sum_{i=0}^q ((ay)^\TOP B_i (ay))^2.
\end{equation}
By homogeneity, $|y|^2|z|^2=|bz|^2|cy|^2$, thus $b^\TOP b=I_{V_z}$ and
$c^\TOP c=I_{V_y}$. On the other hand, by the orthogonality relation $U^\TOP U=1_{\R{n}}$ we have  $a^\TOP a+c^\TOP c=I_{V_y}$, so that $a^\TOP a=0$. It follows that   $a\equiv 0$ and substituting this into (\ref{nab3}) yields
$$
\sum_{i=0}^q (y^\TOP B_i y)^2=0,
$$
a contradiction.

Consider now the remaining alternative (i). Then $b\equiv 0$ and from the orthogonality relations $UU^\TOP=U^\TOP  U =1_{\R{n}}$ we infer $aa^\TOP=I_{V_y}$, $c=0$ and  $d^\TOP d=I_{V_z}$. Therefore $a$ (resp. $d$) is an orthogonal endomorphism of $V_y$ (resp. of $V_z$). Substituting this into (\ref{EQUI}) yields
\begin{equation}\label{nab4}
y^\TOP A_z y=(ay)^\TOP B_{dz} (ay).
\end{equation}
Since both $A_z$ and $B_{dz}$ are symmetric matrices, the latter identity implies $A_z=a^\TOP B_{dz} a$. Since $a$ and $d$ are orthogonal transformations, we arrive to (\ref{geom1}), thus $\mathcal{A}$ and $\mathcal{B}$ are geometrically equivalent. The theorem is proved completely.
\end{proof}

Combining this with the facts mentioned in the beginning of this section, one obtains the following values for the number of distinct congruence classes  of irreducible  minimal Clifford cubics in $\R{n}$ for $n\le 21$ given in  Table~\ref{tab2} below.
\begin{small}
\begin{table}[h]
\renewcommand\arraystretch{1.5}
\noindent\[
\begin{array}{c|c|c|c|c|c|c|c|c|c|c|c|c|c|c|c|c|c|c}
\; n \;&4 &5 &6 &7 &8 &9 &10 &11 &12 &13 &14 &15 &16 &17 &18 &19 &20 &21\\
\hline
\;c(n)  \;& 1 & 0 & 1 &1 &1 &0 &1 &1 &2 &1 &1 &1 &1 &0 &1 &1 &2 &2\\
\end{array}
\]
\caption{}\label{tab2}
\end{table}
\end{small}

\section{Minimal cones of arbitrary high degree}\label{sec:det}

In \cite[Problem~2]{Hsiang67}, Hsiang asks  the following question. For a given dimension $n\ge 4$, are there irreducible homogeneous polynomials in $n$ real variables of \textit{arbitrary high degree}, which give minimal cones of codimension one in $\R{n}$? Below we give a partial answer on this question  by showing that there are irreducible  minimal cones of arbitrary high degree.

Let $X=(x_{ij})_{1\le i,j\le m}$ be an $m$-by-$m$ square matrix, $m\ge 1$. Consider the determinantal function
$$
\Psi_m(X)=\det (x_{ij})
$$
which is an element of the polynomial ring $\R{}[x_{11},\ldots,x_{mm}]$.
It is well known that $\Psi_m(X)$ is a irreducible polynomial in the polynomial ring $\mathbb{C}[x_{11},\ldots,x_{mm}]$ (see, for example, \cite[\S~ 30]{Waerden}, \cite[p.~630]{Wallach}).

\begin{theorem}\label{thm:det}
$\Psi_m(X)$ is an irreducible eigenfunction of $L$ of degree $m$.
\end{theorem}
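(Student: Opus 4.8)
The plan is to verify the two assertions separately. The stated irreducibility of $\Psi_m$ over $\mathbb{C}[x_{11},\ldots,x_{mm}]$ immediately gives irreducibility over $\R{}[x_{11},\ldots,x_{mm}]$, since a nontrivial real factorization would also be a nontrivial complex one; the degree claim $\deg\Psi_m=m$ is clear. So the real content is to show that $\Psi:=\Psi_m$ is an eigenfunction of $L$, i.e. $\Psi\mid L(\Psi)$. Throughout I would write $C_{ij}$ for the $(i,j)$-cofactor of $X$, so that $\Psi_{x_{ij}}=C_{ij}$ and $X\,\mathrm{adj}(X)=\Psi\,I$ with $(\mathrm{adj}\,X)_{ji}=C_{ij}$, equivalently $C_{ij}=\Psi\,(X^{-1})_{ji}$ on the open dense set $\{\det X\neq0\}$.

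First I would record that $\Delta\Psi=0$: the cofactor $C_{ij}$ does not involve $x_{ij}$, so $\Psi_{x_{ij}x_{ij}}=0$ for all $i,j$ and hence $\Delta\Psi=\sum_{i,j}\Psi_{x_{ij}x_{ij}}=0$. The first term of $L$ therefore drops out and it remains to analyse
\[
\Sigma:=\sum_{i,j,k,l}\Psi_{x_{ij}}\,\Psi_{x_{kl}}\,\Psi_{x_{ij}x_{kl}},\qquad L(\Psi)=-\Sigma .
\]
The second ingredient is the bilinear Hessian identity for the determinant,
\[
\Psi\cdot\Psi_{x_{ij}x_{kl}}=\Psi_{x_{ij}}\Psi_{x_{kl}}-\Psi_{x_{il}}\Psi_{x_{kj}},
\]
which I would obtain by differentiating $C_{ij}=\Psi\,(X^{-1})_{ji}$ via $\Psi_{x_{kl}}=\Psi(X^{-1})_{lk}$ and $\partial_{x_{kl}}(X^{-1})_{ji}=-(X^{-1})_{jk}(X^{-1})_{li}$; being a polynomial relation valid on a dense set, it holds identically.

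Multiplying $\Sigma$ by $\Psi$ and inserting this identity produces two quartic sums in the cofactors. With $C=(C_{ij})$ the cofactor matrix, the first is $\bigl(\sum_{i,j}C_{ij}^2\bigr)^2=(\trace(CC^{\TOP}))^2$, and the second, after grouping indices, is $\sum_{i,k}\bigl(\sum_j C_{ij}C_{kj}\bigr)^2=\trace\bigl((CC^{\TOP})^2\bigr)$, so that
\[
\Psi\,\Sigma=(\trace(CC^{\TOP}))^2-\trace\bigl((CC^{\TOP})^2\bigr).
\]
Now $C=\Psi\,(X^{-1})^{\TOP}$ gives $CC^{\TOP}=\Psi^2(XX^{\TOP})^{-1}$; denoting by $\mu_1,\ldots,\mu_m$ the eigenvalues of $G:=XX^{\TOP}$ (so $\prod_a\mu_a=\det G=\Psi^2$), the right-hand side equals $\Psi^4\bigl[(\sum_a\mu_a^{-1})^2-\sum_a\mu_a^{-2}\bigr]=2\Psi^4\sum_{a<b}(\mu_a\mu_b)^{-1}$. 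Since $\sum_{a<b}(\mu_a\mu_b)^{-1}=e_{m-2}(\mu)/\prod_a\mu_a=e_{m-2}(\mu)/\Psi^2$, where $\sigma(X):=e_{m-2}(\mu)$ is the $(m-2)$-nd coefficient of the characteristic polynomial of $XX^{\TOP}$ — a genuine polynomial of degree $2(m-2)$ in the entries — we get $\Psi\,\Sigma=2\Psi^2\sigma(X)$, hence $\Sigma=2\sigma(X)\,\Psi$ and $L(\Psi)=-2\sigma(X)\,\Psi$. This exhibits $\Psi$ as an eigenfunction of $L$ with weight $\lambda(\Psi)=-2\sigma(X)$.

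The step I would be most careful about is the passage from the rational expressions in $X^{-1}$ back to a polynomial conclusion: every manipulation above lives on the dense locus $\{\det X\neq0\}$, and one must note that $L(\Psi)+2\sigma(X)\Psi$ is a polynomial vanishing on a dense set, hence identically zero. I would also recheck the symmetric-function bookkeeping, in particular that $(\sum_a\mu_a^{-1})^2-\sum_a\mu_a^{-2}$ collapses to $2\sum_{a<b}(\mu_a\mu_b)^{-1}$ and that clearing the denominator $\det G=\Psi^2$ yields exactly $2e_{m-2}$; it is precisely here that the full factor $\Psi^2$ (not merely $\Psi$) appears, which is what forces the weight to be polynomial. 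As a sanity check, for $m=3$ one finds $\sigma(X)=e_1(XX^{\TOP})=\trace(XX^{\TOP})=\sum_{i,j}x_{ij}^2$, so $\lambda(\Psi)=-2|x|^2$, confirming that $\Psi_3$ is a \emph{radial} minimal cubic in $\R{9}$, in agreement with its identification with Hsiang's example.
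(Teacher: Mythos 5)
Your proof is correct, and it reaches the same weight function as the paper by a genuinely different route in its second half. Both arguments turn on the same core fact --- the Desnanot--Jacobi identity, which in your formulation reads $\Psi\,\Psi_{x_{ij}x_{kl}}=\Psi_{x_{ij}}\Psi_{x_{kl}}-\Psi_{x_{il}}\Psi_{x_{kj}}$ and in the paper appears as the relation between $2\times 2$ minors of the adjugate and the complementary minors $\det X^{i,k|j,l}$ of $X$. The paper applies that identity term by term, after an antisymmetrization in the indices via $\epsilon_{i,j;k,l}=-\epsilon_{i,l;k,j}$, so that $\det X$ factors out of each summand and the weight emerges explicitly as $-\tfrac12\sum^{*}(\det X^{ik|jl})^2$, a sum of squares of the $(m-2)\times(m-2)$ minors. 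You instead multiply the whole sum by $\Psi$, collapse it to $(\trace(CC^{\TOP}))^2-\trace((CC^{\TOP})^2)$ with $C$ the cofactor matrix, and evaluate via the eigenvalues of $XX^{\TOP}$, obtaining the weight $-2e_{m-2}(XX^{\TOP})$; by Cauchy--Binet this elementary symmetric function is exactly the sum of squares of all $(m-2)\times(m-2)$ minors of $X$, so the two answers coincide. Your packaging avoids the sign bookkeeping with $\epsilon_{i,j;k,l}$ and gives a more intrinsic (orthogonally invariant) description of the weight, at the cost of a detour through the locus $\det X\ne 0$, which you correctly close off by a density argument; the paper's version produces the explicit minor expansion directly and stays entirely within polynomial identities. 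One small point in your favor: your sanity check $\lambda(\Psi_3)=-2|x|^2$ is the correct normalization (as one can confirm for $m=2$, where $L(\Psi_2)=-2\Psi_2$, and as follows from the paper's own formula once the fourfold multiplicity of each minor in the ordered sum $\sum^{*}$ is accounted for); in either normalization $\Psi_3$ is radial, which is all that is used later.
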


\begin{proof}
The case $m=1$ is trivial and for $m=2$ the theorem is equivalent to the well-known fact that the Clifford cone $x_{11}x_{22}-x_{12}x_{21}=0$ is a minimal submanifold in $\R{4}$. Thus, we may suppose that $m\ge 3$.

We have $\partial_{x_{ij}}\Psi_m=(-1)^{i+j}\det X^{i|j}.$
Here and in what follows, $X^{\alpha|\beta}$ denotes the submatrix of $X$ obtained by deleting the rows indexed by $i\in \alpha $ and columns indexed by $j\in \beta$ for the index sets $\alpha,\beta\subset\{1,\ldots,m\}$. Similarly, $X_{\alpha|\beta}$ denotes the submatrix of $X$ that lies in the corresponding rows and columns. Note that in the given notation, $x_{ij}=X_{i|j}$.

Thus,  the second derivative is found as:
\begin{equation}\label{last1}
\partial^2_{x_{ij}x_{kl}} \Psi_m=(-1)^{i+j+k+l}
        \epsilon_{i,j;k,l}\cdot \det X^{i,k|j,l},
\end{equation}
where $\epsilon_{i,j;k,l}$ is the sign of $(i-k)(j-l)$ when the product is nonzero and $\epsilon_{i,j;k,l}=0$ otherwise.
Since $\Psi_m$ is linear in each variable, $\Delta \Psi_m=0$, thus
\begin{equation}\label{last}
\begin{split}
L(\Psi_m)&=-\sum\nolimits^* \partial^2_{x_{ij}\,x_{kl}} \Psi_m \, \partial_{x_{ij}} \Psi_m\partial_{x_{kl}} \Psi_m\\
&=-\sum\nolimits^{*}\epsilon_{i,j;k,l}\det X^{i,k|j,l}
\det X^{i|j}\det X^{k|l}\\
\end{split}
\end{equation}
where $\sum\nolimits^{*}$ denotes the sum over all indices satisfying $(i-k)(j-l)\ne 0$.

On the other hand, $\epsilon_{i,j;k,l}=-\epsilon_{i,l;k,j}$ and $\det X^{i,k|j,l}=\det X^{i,k|l,j}$, hence
\begin{equation}\label{last0}
\begin{split}
L(\Psi_m)=-&\sum\nolimits^* \partial^2_{x_{il}\,x_{kj}} \Psi_m \, \partial_{x_{il}} \Psi_m\partial_{x_{kj}} \Psi_m\\
=-&\sum\nolimits^{*}\epsilon_{i,l;k,j}\det X^{i,k|l,j}
\det X^{i|l}\det X^{k|j}\\
=\phantom{-}&\sum\nolimits^{*}\epsilon_{i,j;k,l}\det X^{i,k|j,l},
\det X^{i|l}\det X^{k|j}\\
\end{split}
\end{equation}
thus
\begin{equation}\label{last2}
\begin{split}
L(\Psi_m)&=-\frac{1}{2}\sum\nolimits^{*}{\epsilon_{i,j;k,l}}
(\det X^{i|j}\det X^{k|l}-\det X^{i|l}\det X^{k|j})\det X^{i,k|j,l}\\
&=
-\frac{1}{2}\sum\nolimits^{*}{\epsilon_{i,j;k,l}}
\left|
        \begin{array}{cc}
        \det X^{i|j} &  \det X^{i|l}\\
        \det X^{k|j} & \det X^{k|l}
        \end{array}
      \right|\cdot\det X^{i,k|j,l}.
\end{split}
\end{equation}
By using the representation of the inverse matrix $X^{-1}$ in terms of its adjoint,
$$
(X^{-1})_{ij}=\frac{(-1)^{i+j}}{\det X}\,\det X^{i|j},
$$
we find for the  $2\times 2$ determinant in (\ref{last2}),
\begin{equation*}\label{last3}
\begin{split}
\left|
        \begin{array}{cc}
        \det X^{i|j} &  \det X^{i|l}\\
        \det X^{k|j} & \det X^{k|l}
        \end{array}
      \right|&=(-1)^{i+j+k+l}(\det X)^2\cdot\left|
        \begin{array}{cc}
         \det(X^{-1})_{ij} &  \det(X^{-1})_{kj}\\
          \det(X^{-1})_{il} & \det(X^{-1})_{kl} \\
        \end{array}
      \right|\\
&\equiv \vphantom{\left|
        \begin{array}{cc}
         \det(X^{-1})_{ij} &  \det(X^{-1})_{kj}\\
          \det(X^{-1})_{il} & \det(X^{-1})_{kl} \\
        \end{array}
      \right|}
(-1)^{i+j+k+l}(\det X)^2\cdot \epsilon_{i,j;k,l}\det (X^{-1})_{ik|jl}
\end{split}
\end{equation*}

On the other hand, there is a formula relating the minors of $X^{-1}$ to those of $X$ (see \cite[\S~0.8.4]{Horn}), which reads in our notation as follows:
$$
\det (X^{-1})_{\alpha|\beta}\det X=(-1)^{|\alpha|+|\beta|}\det X^{\beta|\alpha},
$$
where $|\alpha|=\sum_{i\in \alpha} i$. Combining this with (\ref{last2}) we obtain
\begin{equation}\label{last4}
\begin{split}
L(\Psi_m)&=-\frac{\det X}{2}\sum\nolimits^{*} (\det X^{ik|jl})^2.
\end{split}
\end{equation}
Thus, we see that $\Psi_m$ satisfies (\ref{Lgamma1}) with $\lambda(\Psi_m)=-\frac{1}{2}\sum\nolimits^{*} (\det X^{ik|jl})^2$.

\end{proof}

\begin{example}\label{ex:radial9}
Consider the case $m=3$ in Theorem~\ref{thm:det}. Then
$$
\Psi_3=\det
\left(
  \begin{array}{ccc}
    x_{1} & x_2 &x_3 \\
    x_4 & x_5 & x_6 \\
    x_7 & x_8 & x_9 \\
  \end{array}
\right)=x_1x_5x_9+x_2x_6x_7+x_3x_4x_8-
x_3x_5x_7+x_2x_4x_9+x_1x_6x_8,
$$
and the corresponding weight function is found as:
$$
\lambda(\Psi_3)=-\frac{1}{2}\sum\nolimits^{*} (\det X^{ik|jl})^2=-\frac{1}{2}\sum_{i=1}^9 x_i^2.
$$
The latter relation shows that $\Psi_3$ is a radial minimal cubic in $\R{9}$.  In fact, it can be shown by a direct computation that the Hsiang cubic $b_3(X)$ from Example~\ref{ex3} is congruent to $\Psi_3$.

\end{example}

\section{Concluding remarks}

There is some formal resemblance  between an appearance of Clifford systems in the above constructions (cf. (\ref{fx})) and in the well-known examples of the isoparametric hypersurfaces with four principal curvatures given by Ferus, Karcher and Mz\"unzner in \cite{FKM}. Recall that the latter are the level-sets in the unit sphere of following quartic polynomials:
$$
F(y) =|y|^4-2\sum_{i=0}^q (y^\TOP A_i y)^2, \qquad y\in \R{2m}.
$$
Since these isoparametric hypersurfaces generates minimal quartics (as focal varieties), it would be interesting
to learn whether there is any reasonable connection between the FKM-quartics and the Clifford minimal cubics constructed on the present paper (cf. \cite{Sol}).

Another curious observation is that all the irreducible minimal cubics mentioned in Examples~\ref{ex1}--\ref{ex3}, as well as the new examples of Clifford minimal cubics constructed in Section~\ref{sec:3},  are the \textit{radial} minimal cubics. On the other hand, \textit{reducible}  minimal cubics need not to be radial as the following example shows. Consider a reducible cubic $f=x_{6}(2x_1^2+2x_2^2-x_3^2-x_4^2-x_5^2)$. Then it is an eigenfunction with the weight
$$
\lambda(f)=-28(x_1^2+x_2^2)-10(x_3^2+x_4^2+x_5^2)-16x_6^2.
$$
In a forthcoming paper \cite{Tk2010} we study the general radial cubics in more detail. Some further observations make the following conjecture is plausible.

\begin{conjecture}
Any minimal \textit{irreducible} cubic is radial.
\end{conjecture}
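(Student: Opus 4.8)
The plan is to recast the eigenfunction equation in terms of the Hessian and then to prove that the weight, a priori only a quadratic form, must be a multiple of $|x|^2$. Write $H=H(f)$ for the Hessian of the cubic $f$, a symmetric matrix whose entries are linear forms. For a homogeneous cubic one has the elementary identities $Hx=2\nabla f$, $f=\tfrac16 x^\TOP Hx$, $\Delta f=\trace H$, $|\nabla f|^2=\tfrac14 x^\TOP H^2x$ and $\scal{\nabla f}{H\nabla f}=\tfrac14 x^\TOP H^3x$. Substituting these into (\ref{Lgamma}) turns the minimality condition $L(f)=\lambda f$ into the single polynomial identity
\begin{equation}\label{keyid}
\tfrac32\bigl(\trace H\cdot x^\TOP H^2x-x^\TOP H^3x\bigr)=\lambda(x)\,x^\TOP Hx .
\end{equation}
Writing $\lambda(x)=\scal{\Lambda x}{x}$ for a symmetric operator $\Lambda$, the statement to be proved is exactly that $\Lambda=cI$. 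Since $L$ is orthogonally invariant I may diagonalize $\Lambda$ and assume $\lambda=\sum_i c_i x_i^2$; the goal becomes to show that all eigenvalues $c_i$ coincide.

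The first ingredient is a symmetry principle. Because $L$ is orthogonally invariant and $L(cf)=c^3L(f)$, the weight transforms by $\lambda(f\circ U)=\lambda(f)\circ U$ and $\lambda(cf)=c^2\lambda(f)$. Hence for every element of the symmetry group $G_f:=\{U\in O(n):f\circ U=\pm f\}$ one gets $\lambda(f)\circ U=\lambda(f)$, i.e. $\Lambda$ commutes with $G_f$ and $\lambda$ is $G_f$-invariant. Consequently, whenever $G_f$ acts irreducibly on $\R{n}$, Schur's lemma forces the only invariant symmetric form to be a multiple of $\scal{x}{x}$, so $\lambda=c|x|^2$ and $f$ is radial. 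This already settles the isoparametric cubics of Example~\ref{ex2} and the Hsiang cubics of Example~\ref{ex3}, where the compact groups $SO(3),SU(3),Sp(3),F_4$ and $SO(4),SU(4)$ act irreducibly on the relevant spaces of trace-free (Hermitian) forms. It is, however, \emph{insufficient} for the Clifford cubics of Theorem~\ref{th1}: by the congruence analysis of Theorem~\ref{th:clif} every orthogonal symmetry of $\Phi_{\mathcal A}$ preserves the splitting $V_y\oplus V_z$, so $G_{\Phi}$ is reducible, and there $G_f$-invariance alone would only yield $\lambda=c_1|y|^2+c_2|z|^2$. Their radiality instead comes from the Clifford relation $A_z^2=|z|^2I$ and its consequence (\ref{cube}). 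Thus radiality has (at least) two distinct sources — representation-theoretic and purely algebraic — which is the first sign that the general statement is delicate.

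For the general case I would argue by contradiction from the eigenspace decomposition of $\Lambda$. Assume at least two distinct eigenvalues, write $\R{n}=\bigoplus_\alpha V_\alpha$ for the eigenspaces, $x=\sum_\alpha x^{(\alpha)}$ and $\lambda=\sum_\alpha c_\alpha|x^{(\alpha)}|^2$. Since $\Lambda$ commutes with $G_f$, each $V_\alpha$ is a $G_f$-submodule. The plan is then to feed into (\ref{keyid}) the specializations obtained by setting all but one, and then all but two, of the blocks $x^{(\alpha)}$ equal to zero. On a single block $\lambda$ becomes purely radial, and these specializations constrain the mixed entries of $H$ (equivalently, of the symmetric $3$-tensor of $f$) that couple different eigenspaces; the aim is to show that $f$ must decouple as $f=\sum_\alpha f_\alpha\bigl(x^{(\alpha)}\bigr)$. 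One would then compare, via the additive-plus-cross-term expansion $L(f)=\sum_\alpha L(f_\alpha)+\sum_{\beta\ne\gamma}|\nabla f_\beta|^2\Delta f_\gamma$, the weight of such a decoupled cubic with $\sum_\alpha c_\alpha|x^{(\alpha)}|^2$, seeking to force all but one block to vanish and thereby contradict the irreducibility of $f$.

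The main obstacle is precisely this last implication, and it is genuinely hard. The weight enters (\ref{keyid}) only \emph{multiplicatively}, through the factor $f$, so a non-scalar $\Lambda$ does not translate directly into block structure of the tensor; moreover a genuine decoupling $f=\sum_\alpha f_\alpha$ need not render $f$ reducible, as the irreducible cubic $x_1^3+x_2^3+x_3^3$ shows. Overcoming this will, I expect, require establishing in parallel that an irreducible minimal cubic is necessarily \emph{harmonic}, $\Delta f=0$ — a property shared by every known example and conspicuously failing for the reducible, non-radial cubic $x_6(2x_1^2+2x_2^2-x_3^2-x_4^2-x_5^2)$ — and then exploiting the rigidity of the purely cubic identity $-\tfrac32 x^\TOP H^3x=\lambda\,x^\TOP Hx$ together with the vanishing of the cross terms $|\nabla f_\beta|^2\Delta f_\gamma$. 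Pinning down this interaction between harmonicity, irreducibility and the spectrum of $\Lambda$ is the heart of the matter, and is the reason the statement is offered here only as a conjecture.
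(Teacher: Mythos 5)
The statement you set out to prove is offered in the paper only as a \emph{conjecture}: the paper contains no proof of it at all, and its sole supporting evidence is the observation that every known irreducible minimal cubic (Examples~\ref{ex1}--\ref{ex3}, the Clifford cubics of Theorem~\ref{th1}, and $\Psi_3$ of Example~\ref{ex:radial9}) is radial, together with the reducible non-radial example $x_6(2x_1^2+2x_2^2-x_3^2-x_4^2-x_5^2)$. So there is no proof in the paper to compare yours against, and your proposal — by your own closing admission — is a program rather than a proof. The parts of it that can be checked are correct and worthwhile: the Hessian identities and the resulting reformulation $\tfrac32\bigl(\trace H\cdot x^\TOP H^2x-x^\TOP H^3x\bigr)=\lambda\,x^\TOP Hx$ of the eigenfunction equation are right; the transformation laws $\lambda(f\circ U)=\lambda(f)\circ U$ and $\lambda(cf)=c^2\lambda(f)$ are right, and since a symmetric $\Lambda$ commuting with an irreducible orthogonal action must be scalar (its eigenspaces are invariant subspaces), your Schur argument does settle radiality whenever $G_f$ acts irreducibly, covering Examples~\ref{ex2} and~\ref{ex3}. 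Your observation that this mechanism \emph{fails} for the Clifford cubics — the proof of Theorem~\ref{th:clif}, applied with $\mathcal{A}=\mathcal{B}$, shows every symmetry of $\Phi_{\mathcal{A}}$ is block-diagonal with respect to $V_y\oplus V_z$, so invariance alone only yields $c_1|y|^2+c_2|z|^2$, and radiality there rests on $A_z^2=|z|^2I_{\R{2m}}$ — is a genuinely sharp structural point that the paper does not make explicit.

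The gap, which you correctly locate but do not close, is twofold and is the entire content of the conjecture. First, the hypothesis ``irreducible minimal cubic'' supplies no symmetries: for $f$ with trivial $G_f$ the representation-theoretic half of your argument says nothing, so everything devolves onto the algebraic half. Second, the decisive implications — that a non-scalar $\Lambda$ forces $f$ to decouple as $\sum_\alpha f_\alpha(x^{(\alpha)})$ across the eigenspaces of $\Lambda$, and that a decoupled irreducible minimal cubic cannot exist — are only announced. The weight enters your key identity multiplicatively through $x^\TOP Hx$, so the mixed components of the cubic tensor are constrained only modulo the ideal $(f)$, not pointwise; and decoupling by itself contradicts nothing, as your example $x_1^3+x_2^3+x_3^3$ shows (note, incidentally, that this cubic is not even an eigenfunction: a direct computation gives $L(f)\equiv -54\,x_1x_2x_3\,|x|^2 \bmod f$, so it illustrates only the polynomial-irreducibility caveat, not a threat to the conjecture). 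The auxiliary claim that irreducible minimal cubics are necessarily harmonic is likewise unproven, though consistent with all examples in the paper. In short: your reductions are sound and your diagnosis of where the difficulty sits is accurate, but the statement remains exactly as open after your proposal as it is in the paper, where it appears as an unproved conjecture whose detailed study is deferred to \cite{Tk2010}.
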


Note also that among all minimal cubics considered here, only isoparametric minimal cubics given in Example~\ref{ex2} are properly immersed, i.e. the corresponding defining polynomial $f$ satisfies the non-degenerating property $|\nabla f|\ne 0$ on $\mathcal{F}(f)\setminus \{0\}$. It  follows from a recent result of O.~Perdomo \cite{Perdomo1} that there are no properly immersed minimal cubics in $\R{4}$. It would be interesting to
know if this property extends to higher dimensions.

\subsection*{Acknowledgment}
I would like to thank the referee for offering useful comments and suggestions.

\bibliographystyle{amsunsrt}

\end{document}